\def\cleardoublepage{\clearpage\if@twoside \ifodd\c@page\else
\hbox{}
\thispagestyle{empty}
\newpage
\if@twocolumn\hbox{}\newpage\fi\fi\fi}
\newcommand{\Rep}{\textnormal{Rep}}
\newcommand{\Hom}{\textnormal{Hom}}
\newcommand{\inte}{\textnormal{int}}
\newcommand{\I}{\mathbb{I}}
\newcommand{\colim}[1]{\underset{#1}{\textnormal{colim}}\;}
\renewcommand{\lim}[1]{\underset{#1}{\textnormal{lim}}\;}
\newcommand{\hocolim}{\textnormal{hocolim}}
\newcommand{\R}{\mathbb{R}}
\newcommand{\Z}{\mathbb{Z}}
\newcommand{\C}{\mathbb{C}}
\newcommand{\RP}{\mathbb{RP}}
\newcommand{\Grp}{\textbf{Grp}}
\newcommand{\Top}{\textbf{Top}}
\newcommand{\Set}{\textbf{Set}}
\theoremstyle{definition}
\newtheorem{defi}{Definition}[section]
\newtheorem{exam}[defi]{Example}
\newtheorem{remark}[defi]{Remark}
\theoremstyle{remark}
\theoremstyle{plain}\newtheorem{lemma}[defi]{ Lemma}
\newtheorem{prop}[defi]{ Proposition}
\newtheorem{theorem}[defi]{ Theorem}
\newtheorem{corl}[defi]{ Corollary}
\title{\normalsize{\bf COSIMPLICIAL GROUPS AND SPACES OF HOMOMORPHISMS}}
\author{\normalsize\sc{Bernardo Villarreal}}
\date{}
\begin{document}

\maketitle

\begin{abstract}
\noindent {\sc Abstract:} Let $G$ be a real linear algebraic group and $L$ a finitely generated cosimplicial group. We prove that the space of homomorphisms $\Hom(L_n,G)$ has a homotopy stable decomposition for each $n\geq 1$. When $G$ is a compact Lie group, we show that the decomposition is $G$-equivariant with respect to the induced action of conjugation by elements of $G$. In particular, under these hypothesis on $G$, we obtain stable decompositions for $\Hom(F_n/\Gamma^q_n,G)$ and $\Rep(F_n/\Gamma^q_n,G)$ respectively, where $F_n/\Gamma^q_n$ are the finitely generated free nilpotent groups of nipotency class $q-1$.

The spaces $\Hom(L_n,G)$ assemble into a simplicial space $\Hom(L,G)$. When $G=U$ we show that its geometric realization $B(L,U)$, has a non-unital $E_\infty$-ring space structure whenever $\Hom(L_0,U(m))$ is path connected for all $m\geq1$.
\end{abstract}

\section{Introduction}

Let $G$ be a topological group and $\Gamma$ a finitely generated group. The set of homomorphisms $\Hom(\Gamma,G)$ can be identified with the ordered tuples $(\rho(a_1),...,\rho(a_r))$ in $G^r$, where $\rho\colon\Gamma \to G$ is a homomorphism and $a_1,...,a_r$ is a generating set for $\Gamma$. Computing the homotopy type of $\Hom(\Gamma,G)$ has proven to be rather complicated. Nevertheless, there has been recognition of the stable homotopy type in several cases. When $G\subset GL_n(\C)$ is a closed subgroup, A. Adem and F. Cohen gave a homotopy stable decomposition for $\Hom(\Z^n,G)$ as wedges of the quotient spaces $\Hom(\Z^k,G)/S_1(\Z^k,G)$ with $1\leq k\leq n$. Here $S_1(\Z^k,G)$ stands for the $k$-tuples with at least one entry equal to the identity matrix $I$ in $G$. For an arbitrary finitely generated abelian group $\pi$, A. Adem and J. M. G\'{o}mez gave a similar stable decomposition for $\Hom(\pi,G)$, but in this case, $G$ is a finite product of the compact Lie groups $SU(r),Sp(k)$ and $U(m)$. For a real linear algebraic group $G$, we show that this homotopy stable decomposition works for the finitely generated free nilpotent groups $F_n/\Gamma^q_n$. Here $F_n$ denotes the free group on $n$ generators and $\Gamma^q_n$ is the $q$-th stage of its descending central series. To do this, we use a simplicial approach, by noticing that the familly of groups $\{F_n/\Gamma^q_n\}_{n\geq 0}$ fits into a cosimplicial group. 

In a more general setting, if $L\colon \Delta\to \Grp$ is a finitely generated cosimplicial group and $G$ is a topological group, we get the simplicial space $\Hom(L,G)\colon\Delta^{\text{op}}\to \Top$, where $\Hom(L,G)_n:=\Hom(L_n,G)$. We give a homotopy stable decomposition for the $n$-simplices of $\Hom(L,G)$ as follows. Let $X$ be a simplicial space. Define $S^t(X_n)$ as the subspace of $X_n$ in which any element is in the image of the composition of at least $t$ degeneracy maps. We say $X$ is simplicially NDR when all pairs $(S^{t-1}(X_n),S^{t}(X_n))$ are neighborhood deformation retracts. It was proven in \cite{Gitler} that when $X$ is simplicially NDR, each $X_n$ is homotopy stable equivalent to wedges of $S^t(X_n)/S^{t+1}(X_n)$ with $0\leq t\leq n$. When $G$ is a real algebraic linear group, the simplicial space $X=\Hom(L,G)$ is simplicially NDR.  We prove this by showing that the subspaces $S^t(X_n)$ are real affine subvarieties of $X_n$ for all $0\leq t\leq n$ and therefore can be simultaneously triangulated. Denote $S_t(L_n,G):=S^t(\Hom(L_n,G))$.

\begin{theorem}\label{IntT1}
Let $G$ be a linear algebraic group, and $L$ a finitely generated cosimplicial group. For each $n$, there are natural homotopy equivalences
\[\Theta(n)\colon\Sigma \Hom(L_n,G)\simeq\bigvee_{0\leq k\leq n}\Sigma (S_k(L_n,G))/S_{k+1}(L_n,G)).\] 
\end{theorem}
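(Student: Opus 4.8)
The plan is to reduce the statement to the cited theorem of \cite{Gitler}: that result produces exactly such a stable splitting for any simplicial space $X$ that is simplicially NDR, so it suffices to prove that $X=\Hom(L,G)$ is simplicially NDR, i.e.\ that each pair $\bigl(S^{t-1}(X_n),S^t(X_n)\bigr)$ is an NDR pair. Following the strategy announced in the introduction, I would establish this by exhibiting the descending filtration $X_n=S^0(X_n)\supseteq S^1(X_n)\supseteq\cdots\supseteq S^{n}(X_n)\supseteq S^{n+1}(X_n)=\emptyset$ as a filtration of a real affine variety by Zariski-closed subvarieties, and then invoke simultaneous triangulation of real algebraic sets.

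First I would record the algebraic structure. Write $G\subseteq GL_N(\R)$ as a Zariski-closed subgroup, so $G$ is a real affine variety on which multiplication and inversion are regular maps, and hence each $G^{r}$ is a real affine variety, embedded in some $\R^M$ with its classical topology. If $L_n$ is generated by $r_n$ elements, then $\Hom(L_n,G)\subseteq G^{r_n}$ is the common zero locus of the word maps $g\mapsto w(g)$, $w$ ranging over the relations of $L_n$; since the coordinate ring of $G^{r_n}$ is Noetherian, finitely many relations already cut it out, so $\Hom(L_n,G)$ is a real affine variety. Moreover, every cosimplicial structure map of $L$ sends generators to words in generators, so every face and every degeneracy operator of the simplicial space $\Hom(L,G)$ is a regular map of real affine varieties.

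The heart of the argument is to identify $S^t(\Hom(L_n,G))$ as a subvariety, and here I would use that degeneracies are split by faces. For a surjection $\phi\colon[n]\twoheadrightarrow[n-t]$ in $\Delta$ (whose associated operator $\Hom(L,G)(\phi)\colon X_{n-t}\to X_n$ is a $t$-fold composite of degeneracies) and any section $\iota$ of $\phi$, the simplicial identities give $\Hom(L,G)(\iota)\circ\Hom(L,G)(\phi)=\mathrm{id}$, whence
\[\mathrm{Im}\bigl(\Hom(L,G)(\phi)\bigr)=\{\,x\in X_n:\ \Hom(L,G)(\phi)\,\Hom(L,G)(\iota)\,x=x\,\},\]
the equalizer of two regular self-maps of the affine (hence separated) variety $X_n$, and therefore Zariski-closed in $X_n$. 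Since $S^t(X_n)$ is the finite union of these images over all surjections $[n]\twoheadrightarrow[n-t]$, it is Zariski-closed in $X_n$, i.e.\ a real affine subvariety (and $S^{n+1}(X_n)=\emptyset$ for degree reasons). I expect this to be the main obstacle, i.e.\ the one genuinely new point---a priori the image of a morphism of real varieties is only semialgebraic, and it is precisely the face-splitting coming from the simplicial identities that upgrades it to a closed subvariety.

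Finally, having the finite descending filtration of $X_n=\Hom(L_n,G)$ by real algebraic subsets $S^t(X_n)$, I would apply simultaneous triangulation of real algebraic sets (in the tradition of \L{}ojasiewicz, Hironaka and Hardt): there is a triangulation of $X_n$ in which every $S^t(X_n)$ is a subcomplex. Then each inclusion $S^t(X_n)\hookrightarrow S^{t-1}(X_n)$ is a subcomplex inclusion of CW complexes, hence an NDR pair, so $\Hom(L,G)$ is simplicially NDR. Applying the theorem of \cite{Gitler} then yields the natural homotopy equivalences $\Theta(n)\colon\Sigma\Hom(L_n,G)\simeq\bigvee_{0\le k\le n}\Sigma\bigl(S_k(L_n,G)/S_{k+1}(L_n,G)\bigr)$, the naturality being inherited from the construction in that reference; this is the assertion of the theorem.
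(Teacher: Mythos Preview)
Your proposal is correct and follows essentially the same route as the paper: reduce to the splitting theorem of \cite{Gitler} by showing $\Hom(L,G)$ is simplicially NDR, prove that each $S^t(X_n)$ is a real affine subvariety of $X_n$, invoke simultaneous triangulation so that the $S^t(X_n)$ become subcomplexes, and conclude that the inclusions are cofibrations.

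The one point where you diverge from the paper is the argument that $\mathrm{Im}\,s_i\subset X_n$ is Zariski closed. The paper (Lemma~\ref{Lemma0}) uses that the codegeneracy $s^i\colon L_{n+1}\to L_n$ is surjective, so $\mathrm{Im}\,s_i$ consists exactly of those $\rho\colon L_{n+1}\to G$ killing $\ker s^i$, and the relations $\rho(b_\alpha)=I$ for generators $b_\alpha$ of $\ker s^i$ cut this out by polynomial equations. You instead use the face splitting from the simplicial identities to exhibit the image as the equalizer $\{x:\, s_\phi d_\iota x=x\}$ of two regular self-maps of $X_n$, hence closed. Both arguments are valid; yours is a touch more categorical and works for any simplicial object in affine varieties (it does not rely on the specific representability of $X_n$ as a Hom-space), while the paper's is more hands-on and makes the defining equations explicit. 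Either way the rest of the proof proceeds identically.
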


The free groups $F_n$, assemble into a cosimplicial group which we denote by $F$. In this case $\Hom(F,G)$ is $NG$, the nerve of $G$ seen as a toplogical category with one object, which is also the underlying simplicial space of a model of the classifying space $BG$. For each $n$, we take quotients $F_n/K_n$ by normal subgroups $K_n$ that are compatible with coface and codegeneracy homomorphisms of $F$ to get finitely generated cosimplicial groups denoted by $F/K$. The induced simplicial spaces are more easily described since there is a simplicial inclusion $\Hom(F/K,G)\subset NG$.  For each $q>0$, the family $\{\Gamma^q_n\}_{n\geq 0}$ is compatible with $F$. Theorem \ref{IntT1} applied to this family was first conjectured by A. Adem, F. Cohen and E. Torres in \cite{Ad4}, for closed subgroups of $GL_n(\C)$. 

\begin{corl}\label{IntC1}
If $G$ is a Zariski closed subgroup of $GL_n(\C)$, then there are homotopy equivalences for the cosimplicial group $F/\Gamma^q$,
\[\Sigma \Hom(F_n/\Gamma^q_n,G)\simeq\bigvee_{1\leq k\leq n}\Sigma\left(\bigvee^{\binom{n}{k}}\Hom(F_k/\Gamma^q_k,G)/S_1(F_k/\Gamma^q_k,G)\right)\]  
for all $n$ and $q$. 
\end{corl}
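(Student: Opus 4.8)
The plan is to apply Theorem~\ref{IntT1} to the cosimplicial group $L=F/\Gamma^q$ and then read off the degeneracy subquotients combinatorially. Since each $\Gamma^q_n$ is normal in $F_n$ and the family $\{\Gamma^q_n\}_{n\geq 0}$ is compatible with the cofaces and codegeneracies of $F$, the quotients $F_n/\Gamma^q_n$ form a finitely generated cosimplicial group $F/\Gamma^q$, so for a Zariski closed $G\subseteq GL_n(\C)$ Theorem~\ref{IntT1} gives
\[\Sigma\Hom(F_n/\Gamma^q_n,G)\simeq\bigvee_{0\leq t\leq n}\Sigma\big(S_t(F_n/\Gamma^q_n,G)/S_{t+1}(F_n/\Gamma^q_n,G)\big).\]
The simplicial inclusion $\Hom(F/\Gamma^q,G)\subset NG$ lets me regard $\Hom(F_n/\Gamma^q_n,G)$ as the space of tuples $(g_1,\dots,g_n)\in G^n$ whose coordinates satisfy the relations defining $\Gamma^q_n$, with the degeneracy attached to the codegeneracy $s^i\colon[n]\to[n-1]$ inserting the identity of $G$ into the $(i+1)$-st coordinate.

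First I would identify the subspaces $S_t:=S_t(F_n/\Gamma^q_n,G)$. An iterated codegeneracy $[n]\to[n-t]$ of $\Delta$ is determined by the $t$ ``edges'' it collapses, so the iterated codegeneracy homomorphisms $F_n/\Gamma^q_n\to F_{n-t}/\Gamma^q_{n-t}$ are precisely the quotient maps that kill the generators indexed by a size-$t$ subset $I\subseteq\{1,\dots,n\}$ and relabel the rest in order; there are $\binom{n}{t}$ of them, one per $I$. Dualizing, each gives a closed embedding $\Hom(F_{n-t}/\Gamma^q_{n-t},G)\hookrightarrow\Hom(F_n/\Gamma^q_n,G)$ whose image $Y_I$ is the set of homomorphisms sending the generators in $I$ to $e$. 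Taking the union over all these operators shows $S_t=\bigcup_{|I|=t}Y_I$ is exactly the set of $\rho$ with $\rho(x_j)=e$ for at least $t$ indices $j$: such a $\rho$ factors through the quotient killing those generators, hence lies in a $t$-fold degenerate image; conversely $S_t\subseteq S^t(NG_n)$ since the degeneracies restrict from $NG$, forcing at least $t$ trivial coordinates.

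Next I would compute the subquotients. Two distinct sets $Y_I,Y_{I'}$ meet only in $S_{t+1}$, the union of the traces $Y_I\cap S_{t+1}$ equals $S_{t+1}$, and under the embedding attached to $I$ the pair $(Y_I,Y_I\cap S_{t+1})$ corresponds to $(\Hom(F_{n-t}/\Gamma^q_{n-t},G),S_1(F_{n-t}/\Gamma^q_{n-t},G))$. Since all of these are affine subvarieties of $\Hom(F_n/\Gamma^q_n,G)$ and may be simultaneously triangulated — the same device that makes $\Hom(L,G)$ simplicially NDR — collapsing $S_{t+1}$ in $S_t$ realizes $S_t/S_{t+1}$ as the wedge over $I$ of the quotients $Y_I/(Y_I\cap S_{t+1})$, that is
\[S_t/S_{t+1}\simeq\bigvee^{\binom{n}{t}}\Hom(F_{n-t}/\Gamma^q_{n-t},G)\big/S_1(F_{n-t}/\Gamma^q_{n-t},G).\]
Substituting this into the decomposition of the first paragraph, reindexing by $k=n-t$, using $\binom{n}{n-k}=\binom{n}{k}$, and dropping the $k=0$ term — a point, since $F_0/\Gamma^q_0$ is trivial — yields Corollary~\ref{IntC1} for all $n$ and $q$. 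The only genuinely delicate point is the last one: that after collapsing $S_{t+1}$ the pieces $Y_I$ become disjointly wedged rather than merely homotopy-glued, which is where simultaneous triangulability of the stratification is used. Everything else is bookkeeping — matching iterated codegeneracies with subsets of the generating set — and all the homotopy-theoretic content is inherited from Theorem~\ref{IntT1}.
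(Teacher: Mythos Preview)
Your proposal is correct and follows essentially the same route as the paper: apply Theorem~\ref{IntT1} to $L=F/\Gamma^q$, then identify the filtration subquotients $S_t/S_{t+1}$ combinatorially via the subspaces $Y_I$ of tuples with identity in the positions indexed by $I$, and reindex. This is exactly the content of the paper's Lemma~\ref{Lemma1}, which packages the same combinatorics using the projection maps $P_{i_1,\dots,i_{n-k}}\colon G^n\to G^{n-k}$ assembled into a single map $\eta_n$ whose restriction to $S_k$ exhibits the wedge directly.

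One small remark: you flag the wedge identification $S_t/S_{t+1}\cong\bigvee_I Y_I/(Y_I\cap S_{t+1})$ as the ``genuinely delicate point'' requiring simultaneous triangulation, but in fact no triangulation is needed here. Since the $Y_I$ form a finite closed cover of $S_t$ with pairwise intersections contained in $S_{t+1}$, the map $\bigsqcup_I Y_I\to S_t$ is a quotient map, and collapsing $S_{t+1}$ yields the wedge as an honest homeomorphism of quotient spaces. The paper's Lemma~\ref{Lemma1} accordingly states $\cong$ rather than $\simeq$ and makes no appeal to triangulation for this step; the triangulation machinery is used only upstream, in the proof of Theorem~\ref{IntT1}, to establish the NDR condition. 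Your caution does no harm, but the step is more elementary than you suggest.
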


For any finitely generated cosimplicial group $L$, conjugation under elements of $G$ gives $\Hom(L_n,G)$ a $G$-space structure. Moreover, if $G$ is a real algebraic linear group, then it has a $G$-variety structure. The subspaces $S^t(\Hom(L_n,G))$ are subvarieties that are invariant under the action of $G$ for all $0\leq t\leq n$. Using techniques from \cite{Park}, when $G$ is a compact Lie group, we show that $\Hom(L_n,G)$ has a $G$-CW-complex structure where each $S^t(\Hom(L_n,G))$ is a $G$-subcomplex. This allows us to prove the equivariant version of the previous theorem. Let $\Rep(L_n,G)$ and $\overline{S_t}(L_n,G)$ denote the orbit spaces of $\Hom(L_n,G)$ and $S^t(\Hom(L_n,G))$ respectively.

\begin{theorem}
Let $G$ be a compact Lie group. Then for each $n$, $\Theta(n)$ in Theorem \ref{IntT1} is a $G$-equivariant homotopy equivalence, and in particular we get homotopy equivalences
\[\Sigma \Rep(L_n,G)\simeq \bigvee_{1\leq k\leq n}\Sigma(\overline{S}_k(L_n,G)/\overline{S}_{k+1}(L_n,G)).\]
\end{theorem}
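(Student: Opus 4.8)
The plan is to upgrade the nonequivariant stable splitting of Theorem \ref{IntT1} to a $G$-equivariant one by exhibiting a $G$-CW structure on $X_n=\Hom(L_n,G)$ in which each filtration piece $S^t(X_n)$ is a $G$-subcomplex, and then invoking the equivariant analogue of the splitting machine from \cite{Gitler}. First I would recall from the discussion preceding the theorem that conjugation makes $X_n$ a real $G$-variety and each $S^t(X_n)$ a $G$-invariant subvariety; the codegeneracy/coface maps of $L$ induce $G$-equivariant simplicial structure maps, so $X=\Hom(L,G)$ is a simplicial $G$-space. The first real step is to apply the techniques of \cite{Park} (equivariant triangulation of $G$-varieties for $G$ a compact Lie group) to the nested chain of $G$-subvarieties $X_n=S^0(X_n)\supseteq S^1(X_n)\supseteq\cdots\supseteq S^n(X_n)$, obtaining a simultaneous $G$-CW triangulation of $X_n$ for which every $S^t(X_n)$ is a $G$-subcomplex. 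This makes each pair $(S^{t-1}(X_n),S^t(X_n))$ a $G$-NDR pair (one can take an invariant metric and the standard mapping-cylinder neighborhood of a subcomplex, or average a non-equivariant NDR structure over $G$), i.e. $X$ is "equivariantly simplicially NDR."

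Next I would check that the decomposition map $\Theta(n)$ constructed in the proof of Theorem \ref{IntT1} is itself $G$-equivariant. The map is assembled from the simplicial structure maps (degeneracies $s_i$ and the combinatorial collapse maps onto the quotients $S_k(X_n)/S_{k+1}(X_n)$), all of which are $G$-equivariant because conjugation commutes with the cosimplicial operations of $L$; the suspension is given the trivial action on the $[0,1]$-coordinate, so $\Sigma$ is a functor on pointed $G$-spaces and $\Theta(n)$ is a map of pointed $G$-spaces. Then I would invoke the $G$-equivariant version of the Gitler--type stable splitting: for a simplicially NDR simplicial $G$-space, $\Theta(n)$ is a $G$-homotopy equivalence. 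Concretely, since it is a map of $G$-CW complexes that induces a homotopy equivalence on $H$-fixed points for every closed subgroup $H\leq G$ — because the fixed-point functor commutes with the finite wedge, with $\Sigma$, and with the filtration quotients, reducing the $H$-fixed statement to the nonequivariant Theorem \ref{IntT1} applied to the simplicial space $X^H$ (the filtration $S^t$ of a $G$-subcomplex passes to $X^H$) — the equivariant Whitehead theorem gives that $\Theta(n)$ is a $G$-homotopy equivalence.

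Finally, passing to orbit spaces, $(-)/G$ is left adjoint to the trivial-action functor and sends $G$-homotopy equivalences to homotopy equivalences and commutes with wedges and (unreduced-then-basepointed) suspension; applying it to $\Theta(n)$ and using $X_n/G=\Rep(L_n,G)$, $S^t(X_n)/G=\overline{S_t}(L_n,G)$, and $(S_k(X_n)/S_{k+1}(X_n))/G=\overline{S}_k(L_n,G)/\overline{S}_{k+1}(L_n,G)$, yields the stated equivalence $\Sigma\Rep(L_n,G)\simeq\bigvee_{1\leq k\leq n}\Sigma(\overline{S}_k(L_n,G)/\overline{S}_{k+1}(L_n,G))$ (the range starts at $k=1$ since $S_0/S_1$ collapses to a point, or rather its orbit is the basepoint).

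The main obstacle I anticipate is the first step: producing the equivariant simultaneous triangulation so that all $n+1$ subvarieties $S^t(X_n)$ become $G$-subcomplexes at once, and verifying that this triangulation is compatible enough across different $n$ to make $\Theta(n)$ simplicial-structure-respecting and natural. The nonequivariant case uses that real affine varieties can be simultaneously triangulated; the equivariant refinement requires the machinery of \cite{Park} applied carefully to a filtration rather than a single pair, together with the observation that $S^t(X_n)$ is cut out by the algebraic condition "lies in the image of at least $t$ degeneracies," which is $G$-invariant because each degeneracy $s_i\colon X_{n-1}\to X_n$ is $G$-equivariant. Once the equivariant NDR/$G$-CW structure is in place, the rest is a formal transfer of the nonequivariant argument through the fixed-point and orbit functors.
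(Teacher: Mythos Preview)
Your proposal is correct and follows essentially the same route as the paper: use the Park--Suh equivariant triangulation to put a $G$-CW structure on $\Hom(L_n,G)$ with each $S_t(L_n,G)$ a $G$-subcomplex, observe that $\Theta(n)$ is $G$-equivariant by naturality of the simplicial action, check on $H$-fixed points that $\Theta(n)^H$ agrees with the nonequivariant splitting map for the simplicial space $\Hom(L,G)^H$ (which is simplicially NDR since fixed points of $G$-subcomplexes are subcomplexes), and conclude by the equivariant Whitehead theorem. Your anticipated ``obstacle'' about compatibility of the triangulation across different $n$ is not actually needed---the argument is carried out for each $n$ separately, and the equivariance of $\Theta(n)$ comes from its naturality with respect to simplicial maps (Proposition~\ref{prop15}), not from any compatibility of triangulations.
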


\noindent Applying this to the cosimplicial group $F/\Gamma^q$ as in Corollary \ref{IntC1}, we obtain 
\[\Sigma \Rep(F_n/\Gamma^q_n,G)\simeq \bigvee_{1\leq k\leq n}\Sigma\left(\bigvee^{\binom{n}{k}}\Rep(F_k/\Gamma^q_k,G)/\overline{S_1}(F_k/\Gamma^q_k,G)\right).\]

In the second part of this paper we study the geometric realization of $\Hom(L,G)$ for a finitely generated cosimplicial group, which we denote by $B(L,G)$. We show that the set of 1-cocycles of $L$ denoted by $Z^1(L)$ is in one to one correspondence with cosimplicial morphisms $F\to L$. With this we show that any 1-cocycle of $L$ defines a principal $G$-bundle over $B(L,G)$. 

When $G=U=\colim{m}U(m)$ we show that $B(L,U)$ has an $\I$-rig structure, that is, if $\I$ stand for the category of finite sets and injections, the functor $B(L,U(\_))\colon\I\to \Top$ is symmetric monoidal with respect to both symmetric monoidal structures on $\I$.  Using the machinery developed in \cite{Ad3} we prove:

\begin{theorem}
Let $L$ be finitely generated cosimplicial group and suppose that the space $\Hom(L_0,U(m))$ is path connected for all $m\geq 1$. Then, $B(L,U)$ is a non-unital $E_\infty$-ring space.
\end{theorem}

This theorem is also true if we replace $U$ by $SU,Sp,SO$ or $O$.\\

{\bf Acknowledgements:} I would like to thank A. Adem for his supervision and advice throughout this work. O. Antol\'{i}n Camarena for his useful input within the details of this paper. I would also like to thank M. Bergeron, F. Cohen and J.  M. G\'{o}mez, for their comments on an earlier version.

\section{Homotopy Stable Decompositions}

\subsection[Spaces of Homomorphisms]{Spaces of Homomorphisms}

Let $G$ be a topological group and $\Gamma$ a finitely generated group. Any homomorphism $\rho\colon \Gamma\to G$ is uniquely determined by $(\rho(\gamma_1),...,\rho(\gamma_n))\in G^n$ when $\gamma_1,...,\gamma_n\in \Gamma$ is a set of generators. On the other hand, if we fix a presentation of $\Gamma$, then an $n$-tuple $(g_1,...,g_n)\in G^n$ will induce an element in $\Hom(\Gamma,G)$ whenever $\{g_i\}_{i=1}^n$ satisfy the relations in the presentation of $\Gamma$. Thus, there is a one to one correspondence between the subset of such $n$-tuples in $G^n$ and $\Hom(\Gamma,G)$. Topologize $\Hom(\Gamma,G)$ with the subspace topology on $G^n$.

\begin{lemma}\label{lemma2}
Let $\varphi\colon\Gamma\to\Gamma^\prime$ be a homomorphism of finitely generated groups. If $G$ is a topological group, then $\varphi^*\colon \Hom(\Gamma^\prime,G)\to \Hom(\Gamma,G)$ is continuous.
\end{lemma}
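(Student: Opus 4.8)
The plan is to show that $\varphi^*$ is the restriction of a continuous map between products of copies of $G$. First I would fix generating sets: let $a_1,\dots,a_m$ generate $\Gamma$ and $b_1,\dots,b_r$ generate $\Gamma'$, so that $\Hom(\Gamma,G)\subseteq G^m$ and $\Hom(\Gamma',G)\subseteq G^r$ as subspaces via evaluation on these generators. Next, for each generator $a_i$ of $\Gamma$, write $\varphi(a_i)\in\Gamma'$ as a word $w_i(b_1,\dots,b_r)$ in the generators of $\Gamma'$ and their inverses; this is possible precisely because $b_1,\dots,b_r$ generate $\Gamma'$.

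The key observation is then that, under the identifications above, $\varphi^*$ sends a tuple $(g_1,\dots,g_r)\in\Hom(\Gamma',G)$ to $(w_1(g_1,\dots,g_r),\dots,w_m(g_1,\dots,g_r))\in G^m$: indeed if $\rho\colon\Gamma'\to G$ corresponds to $(g_1,\dots,g_r)$, then $(\varphi^*\rho)(a_i)=\rho(\varphi(a_i))=\rho(w_i(b_1,\dots,b_r))=w_i(\rho(b_1),\dots,\rho(b_r))=w_i(g_1,\dots,g_r)$, using that $\rho$ is a homomorphism. So I would define $W\colon G^r\to G^m$ by $W(g_1,\dots,g_r)=(w_1(g),\dots,w_m(g))$; each coordinate is a finite product of the $g_j$ and their inverses, hence continuous since multiplication and inversion in $G$ are continuous, so $W$ is continuous.

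Finally I would note that $\varphi^*$ is the corestriction to $\Hom(\Gamma,G)$ of the restriction $W|_{\Hom(\Gamma',G)}$: the computation above shows $W$ maps $\Hom(\Gamma',G)$ into $\Hom(\Gamma,G)$ and agrees with $\varphi^*$ there. A restriction of a continuous map to a subspace is continuous, and corestricting to a subspace containing the image is continuous by definition of the subspace topology, so $\varphi^*$ is continuous. I do not expect a genuine obstacle here; the only point requiring minor care is checking well-definedness, namely that the value $w_i(g_1,\dots,g_r)$ is independent of the chosen word representing $\varphi(a_i)$ — but this is automatic because it equals $(\varphi^*\rho)(a_i)$, which depends only on $\rho$ and $\varphi$, so no consistency argument beyond the homomorphism property of $\rho$ is needed.
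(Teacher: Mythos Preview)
Your proposal is correct and follows essentially the same approach as the paper: both arguments fix generating sets, express each $\varphi(a_i)$ as a word in the generators of $\Gamma'$, and observe that $\varphi^*$ is the restriction of the resulting continuous word map $G^r\to G^m$. Your write-up is slightly more explicit about the corestriction and well-definedness, but the underlying idea is identical.
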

\begin{proof}
Suppose $\Gamma=\langle a_1,...,a_r\;|\;R\rangle$ and $\Gamma^\prime=\langle b_1,...,b_m\;|\;R^\prime\rangle.$ Recall that the induced map $\varphi^*\colon \Hom(\Gamma^\prime,G)\to \Hom(\Gamma,G)$ is given by 
\[(\rho(b_1),...,\rho(b_m))\mapsto(\rho(\varphi(a_1)),...,\rho(\varphi(a_r)))\]
for $\rho\colon\Gamma^\prime\to G$. For any $i$, $\varphi(a_i)=b_{i_1}^{n_{i_1}}\cdots b_{i_{q_i}}^{n_{i_{q_i}}}$. By fixing one presentation for each $\varphi(a_i)$ we get that $\varphi^*$ is given by
\begin{align*}
(\rho(b_1),...,\rho(b_m))\mapsto&(\rho(b_{1_1}^{n_{1_1}}\cdots b_{1_{q_1}}^{n_{1_{q_1}}}),...,\rho(b_{r_1}^{n_{r_1}}\cdots b_{r_{q_r}}^{n_{r_{q_r}}}))=\\
&(\rho(b_{1_1})^{n_{1_1}}\cdots\rho( b_{1_{q_1}})^{n_{1_{q_1}}},...,\rho(b_{r_1})^{n_{r_1}}\cdots \rho(b_{r_{q_r}})^{n_{r_{q_r}}}).
\end{align*}
Therefore $\varphi^*$ is the restriction of the map $G^m\to G^r$ given by
\[(g_1,...,g_m)\mapsto(g_{1_1}^{n_{1_1}}\cdots g_{1_{q_1}}^{n_{1_{q_1}}},...,g_{r_1}^{n_{r_1}}\cdots g_{r_{q_r}}^{n_{r_{q_r}}})\]
which is continuous.
\end{proof}

In particular, this Lemma tells us that given any two presentations of $\Gamma$, we get an isomorphism $\varphi\colon\Gamma\to\Gamma$ and hence a homeomorphism $\varphi^*$ between the induced spaces of homomorphisms. Therefore the topology on the space of homomorphisms does not depend on the choice of presentations.

Recall that an affine variety is the zero locus in $k^n$ of a family of polynomials on $n$ variables over a field $k$. Throughout this paper we will focus only on $k=\R$. An affine variety that has a group structure with group operations given by polynomial maps, i.e., maps $f=(f_1,...,f_n)$ where each $f_i$ is a polynomial, is called a linear algebraic group. For example, consider any matrix group. It is easy to check that matrix multiplication is in fact a polynomial map. For the inverse operation of matrices, it is easier to think of matrix groups as subgroups of $SL(n,\R)$. Any matrix $A$ in $SL(n,\R)$ satisfies $A^{-1}=C^t$, the transpose of the cofactor matrix $C$ of $A$. Since the cofactor matrix is described only in terms of minors of $A$, the map $A\mapsto C^t$ is a polynomial map. In fact, this is the general example, since it can be shown that any linear algebraic group is a group of matrices. 

\begin{lemma}\label{lemma1}
Let $G$ be a linear algebraic group, then for any finitely generated group $\Gamma$, $\Hom(\Gamma,G)$ is an affine variety. Moreover, if $\varphi$ is a homomorphism of finitely generated groups, then $\varphi^*$ is a polynomial map.
\end{lemma}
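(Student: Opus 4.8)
The plan is to write $\Hom(\Gamma,G)$ explicitly as the common zero locus of a family of polynomials inside a fixed affine space, using the observations recorded just above: that $G$ may be realized as a Zariski-closed subgroup of some $SL(N,\R)$, that matrix multiplication is a polynomial map, and that inversion on $SL(N,\R)$ is given by the (polynomial) adjugate formula $A\mapsto \mathrm{adj}(A)=C^t$.

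First I would fix a presentation $\Gamma=\langle a_1,\dots,a_r\mid w_\alpha,\ \alpha\in A\rangle$, allowing the relator index set $A$ to be infinite since $\Gamma$ is only assumed finitely generated, and identify $\Hom(\Gamma,G)$ with the set of tuples $(g_1,\dots,g_r)\in G^r$ such that $w_\alpha(g_1,\dots,g_r)=I$ for every $\alpha$, as at the beginning of this subsection. The key elementary point is that for any word $w=a_{i_1}^{\epsilon_1}\cdots a_{i_k}^{\epsilon_k}$ the evaluation map $(g_1,\dots,g_r)\mapsto w(g_1,\dots,g_r)$ is the restriction of a polynomial map $\R^{rN^2}\to\R^{N^2}$: one composes, according to the syllables of $w$, the polynomial multiplication map together with the polynomial map $A\mapsto\mathrm{adj}(A)$, which agrees with $A\mapsto A^{-1}$ on $SL(N,\R)$.

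With this in hand I would assemble the defining equations. The product $G^r$ is itself an affine variety, cut out in $\R^{rN^2}$ by the equation $\det(\cdot)=1$ in each factor together with the polynomials defining $G$ inside $SL(N,\R)$ in each factor; and $\Hom(\Gamma,G)$ is the further intersection with the loci where every matrix entry of $w_\alpha(g_1,\dots,g_r)-I$ vanishes, $\alpha$ ranging over $A$. Thus $\Hom(\Gamma,G)$ is the vanishing locus of a (possibly infinite) family of polynomials in $\R^{rN^2}$, hence an affine variety; if a finite defining family is desired, the Hilbert basis theorem lets one discard all but finitely many relators. For the naturality statement, given $\varphi\colon\Gamma\to\Gamma'$ with $\Gamma'=\langle b_1,\dots,b_m\mid R'\rangle$, Lemma \ref{lemma2} already exhibits $\varphi^*$ as the restriction to $\Hom(\Gamma',G)$ of the map $G^m\to G^r$ whose $i$-th block is the evaluation of the word $\varphi(a_i)$ in the $b_j$'s; by the previous paragraph this map extends to a polynomial map $\R^{mN^2}\to\R^{rN^2}$, so $\varphi^*$ is the restriction of a polynomial map, i.e. a morphism of affine varieties.

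I do not expect a genuine obstacle here; the content is entirely in organizing the polynomial bookkeeping. The two points that need a little care are that ``affine variety'' must be read as allowing the vanishing locus of an infinite family of polynomials (equivalently, one invokes Noetherianity to pass to a finite subfamily), since $\Gamma$ need not be finitely presented; and that the word-evaluation maps genuinely extend to polynomial maps on the ambient affine space, which is exactly what legitimizes substituting $\mathrm{adj}$ for $g\mapsto g^{-1}$ in the word maps.
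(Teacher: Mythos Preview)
Your proposal is correct and follows essentially the same route as the paper: fix a presentation, observe that word evaluation in $G\subset SL(N,\R)$ is polynomial (via multiplication and the adjugate), and cut out $\Hom(\Gamma,G)$ inside $G^r$ by the matrix-entry equations $w_\alpha(g_1,\dots,g_r)_{ij}=\delta_{ij}$; the paper then handles $\varphi^*$ by the identical remark that it is built from products and inverses. Your write-up is in fact a bit more careful than the paper's in flagging the possibly infinite relator set and the appeal to Noetherianity, but there is no substantive difference in method.
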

\begin{proof}
Suppose $\Gamma$ is generated by $\gamma_1,\gamma_2,...,\gamma_r$ and has a presentation $\{p_\alpha\}_{\alpha\in\Lambda}$. Each $p_\alpha$ is of the form $\gamma_{k_1}^{n_1}\cdots\gamma_{k_q}^{n_q}=e$, $n_j\in \Z$ and $\gamma_{k_l}\in \{\gamma_1,...,\gamma_r\}$ for all $1\leq j\leq q$. For any homomorphism $\rho\colon\Gamma\to G$ and any such relation $p_\alpha$ we have  \[\rho(p_\alpha)=\rho(\gamma_{i_1}^{n_1}\cdots\gamma_{i_q}^{n_q})=\rho(\gamma_{i_1})^{n_1}\cdots\rho(\gamma_{i_q})^{n_q}=I,\] the identity matrix in $G$. Since products and inverses in $G$ are given in terms of polynomials, this sets up a family of polynomial relations $\{y_{\alpha,i,j}\}_{\alpha,i,j}$, where each $y_{\alpha,i,j}$ is induced by $\rho(p_\alpha)_{i,j}=\delta_{ij}$, the $i,j$ entry of the matrix equality $\rho(p_\alpha)=I$. These relations do not depend on $\rho$, only on $p_\alpha$, in the sense that any $r$-tuple $(g_1,...,g_r)\in G$ satisfying $\{y_{\alpha,i,j}\}_{\alpha,i,j}$, i.e.
\[(g_{i_1}^{n_1}\cdots g_{i_q}^{n_q})_{i,j}=\delta_{ij}\]
for all $\alpha\in\Lambda$ and $1\leq i,j\leq n$, is an element of $\Hom(\Gamma,G)$. Adding the polynomial relations $\{y_{\alpha,i,j}\}_{\alpha\in\Lambda}$ to the ones describing $G^r$ define $\Hom(\Gamma,G)$ as an affine variety. 

For the second part, recall from the proof of Lemma \ref{lemma2}, that $\varphi^*$ is defined in terms of products and inverses of matrices and thus a polynomial map. 
\end{proof}

Similarly, this Lemma tells us that the affine variety structure on $\Hom(\Gamma,G)$ does not depend on the presentation of $\Gamma$. Indeed, any isomorphism of groups will induce an isomorphism of affine varieties.

\subsection{Triangulation of Semi-algebraic Sets}

\begin{defi}
A real semi-algebraic set is a finite union of subsets of the form
\[\{x\in\R^n\;|\;f_{i}(x)>0,\;g_{j}(x)=0\text{ for all }i,j\},\]
where $f_i(x)$ and $g_j(x)$ are a finite number of polynomials with real coefficients. 
\end{defi}

Using Hilbert's basis theorem, all affine varieties over $\R$ are real semi-algebraic sets. Indeed, the zero locus ideal of an affine variety will be finitely generated and thus the affine variety can be carved out by finitely many polynomials.

What makes semi-algebraic sets more interesting is that images of semi-algebraic sets in $\R^n$ under a polynomial map $\R^n\to\R^m$ are semi-algebraic sets in $\R^m$ (see \cite[p.~167]{hironaka1}), as opposed to affine varieties and regular maps.

 Let $M$, $N$ be semi-algebraic subsets of $\R^m$ and $\R^n$, respectively. A continuous map $f\colon M\to N$ is said to be semi-algebraic if its graph is a semi-algebraic set in $\R^m\times\R^n$. The next result is proven in \cite[~p. 170]{hironaka1}.

\begin{prop}\label{Trian1}
Given a finite system of bounded semi-algebraic sets $M_i$ in $\R^n$, there is a simplicial complex $K$ in $\R^n$ and a semi-algebraic homeomorphism $k\colon |K|\to \bigcup \overline{M_i}$ where each $M_i$ is a finite union of $k(\inte|\sigma|)$'s with $\sigma\in K$.
\end{prop}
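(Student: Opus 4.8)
The plan is induction on the ambient dimension $n$, following the classical argument for triangulating semi-algebraic sets (cf. \cite{hironaka1}). Enlarging the finite system, I may assume it also contains a closed cube $C$ that contains $\bigcup\overline{M_i}$ in its interior; I will triangulate $C$ by a simplicial complex $K'$ so that every $\overline{M_i}$ and every $M_i$ is a union of images of open faces, and then take $K$ to be the subcomplex of those faces whose image lies in $\bigcup\overline{M_i}$ (this union is closed, hence a subcomplex). Fix finitely many $f_1,\dots,f_s\in\R[x_1,\dots,x_n]$ cutting out all the $M_i$ by sign conditions; the case $n=0$ is a finite set of points and is immediate.

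For the inductive step, after a generic linear change of coordinates each $f_k$ has constant, nonzero leading coefficient in $x_n$. Let $D\subset\R^{n-1}$ be the union of the images, under the projection $\R^n\to\R^{n-1}$, of $\{f_k=0,\ \partial f_k/\partial x_n=0\}$ for each $k$ and of $\{f_k=0,\ f_l=0\}$ for $k\neq l$; this is a bounded semi-algebraic set. Apply the inductive hypothesis to the system consisting of $D$, the base cube $C'$, and the faces of $C'$, obtaining a simplicial complex $L$ in $\R^{n-1}$ and a semi-algebraic homeomorphism onto a neighborhood of $C'$ in which $D$ is a union of open-face images. Over $\inte|\tau|$ for each $\tau\in L$, the real roots of all the $f_k$ can be listed as continuous semi-algebraic functions $\xi_1(y)<\dots<\xi_{m(\tau)}(y)$ of constant number, and the sign of each $f_k$ is constant on each open band between consecutive graphs; intersecting with the fibres of $C$ refines this into finitely many graphs and bands, each lying over $\inte|\tau|$ and each refining all the $\overline{M_i}$.

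It remains to convert this semi-algebraic cell decomposition of $C$ into a simplicial complex. Straightening the graphs $\xi_j$ fibrewise by a semi-algebraic homeomorphism, the part of $C$ over $|\tau|$ becomes a prism $|\tau|\times[0,1]$ cut by finitely many horizontal sections $|\tau|\times\{t_i\}$. Fix a total order on the vertices of $L$; over each simplex $\tau=[v_0<\dots<v_d]$ one has the standard subdivision of the prism $|\tau|\times[0,1]$ into simplices, and inserting the interior levels $t_i$ refines it further. Because both the orderings and the inserted levels agree on shared faces $|\sigma|$ with $\sigma<\tau$, these subdivisions glue over all of $L$ into a simplicial complex $K'$ with a continuous semi-algebraic bijection $k'\colon|K'|\to C$ of compact Hausdorff spaces, hence a homeomorphism; by construction each $\overline{M_i}$ and each $M_i$ is a union of images of open faces. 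Restricting to the subcomplex over $\bigcup\overline{M_i}$ produces the required $K$ and $k$.

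The main obstacle is exactly this last conversion: one must cut the prisms $|\tau|\times[0,1]$ into simplices \emph{coherently}, so that the decomposition restricts correctly on every common face $|\sigma|\times[0,1]$ and stays compatible with the internal sections $\xi_j$. The total order on the vertex set of $L$ is what makes the prism subdivisions canonical and hence automatically compatible; but one still has to check, by an induction on $\dim\tau$, that the fibrewise straightening homeomorphisms extend continuously and semi-algebraically across the strata of $L$ — this is where compatibility of $L$ with the discriminant set $D$ is used — and that the assembled map $k'$ is injective on the closed faces, not merely on the open ones. Boundedness of the $M_i$ lets us sidestep the (more delicate) unbounded case by working inside the fixed cube $C$.
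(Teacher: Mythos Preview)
The paper does not prove this proposition at all: it states the result and cites \cite[p.~170]{hironaka1} for the proof. So there is no ``paper's own proof'' to compare against beyond the reference itself.

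Your sketch is precisely the classical Hironaka--\L ojasiewicz argument that the cited reference carries out: induction on the ambient dimension, a generic projection to make the defining polynomials regular in $x_n$, pushing down the critical/discriminant locus $D$ to $\R^{n-1}$, triangulating the base by induction, and then lifting via a cylindrical decomposition whose graph-and-band pieces are turned into simplices by coherently subdividing prisms. So your approach is the same as the one the paper defers to.

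One cautionary remark on the sketch as written: the number $m(\tau)$ of root functions over $\inte|\tau|$ is constant on each open cell but can drop when passing to a face $\sigma<\tau$ (two roots may merge or escape). Your phrase ``inserting the interior levels $t_i$'' and ``the inserted levels agree on shared faces'' glosses over this: the levels do \emph{not} literally agree across faces, and the straightening homeomorphisms must be built so that collapsing sections are handled. You do flag exactly this as ``the main obstacle,'' which is honest; just be aware that making it precise is the bulk of the work in \cite{hironaka1}, and the one-paragraph description you give (total order on vertices plus the standard prism subdivision) is not by itself enough to guarantee compatibility when $m(\sigma)<m(\tau)$.
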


\begin{remark}\label{Trian2}
Proposition \ref{Trian1} can be stated without the boundedness condition and the details can be found in \cite[~Theorem 2.12]{Park}, where they add the hipotesis $\bigcup M_i$ closed in $\R^n$.
\end{remark}

In the next sections, we will be using this last result in its full extension, but a first application is that any affine variety $Z$ can be triangulated, that is, there exists a simplicial complex $K$ and a homeomorphism $|K|\cong Z$.  With Lemma \ref{lemma1} and Proposition \ref{Trian1} we prove the following.

\begin{corl}
Let $\Gamma$ be a finitely generated group and $G$ a real linear algebraic group. Then $\Hom(\Gamma,G)$ is a triangulated space.
\end{corl}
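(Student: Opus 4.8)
The plan is to chain together the two results that have just been established: Lemma~\ref{lemma1}, which says that $\Hom(\Gamma,G)$ is an affine variety over $\R$, and the triangulation machinery of Proposition~\ref{Trian1} (together with Remark~\ref{Trian2}, which removes the boundedness hypothesis provided the set is closed). First I would observe that $G$, being a linear algebraic group, sits as a Zariski-closed subgroup of some $GL(n,\R)$, hence as a real affine variety inside $\R^{n^2}$ (or inside $\R^{(n+1)^2}$ after the standard trick of realizing $GL(n,\R)$ as a closed subvariety of $SL(n+1,\R)$ via $A\mapsto \mathrm{diag}(A,(\det A)^{-1})$, so that matrix inversion becomes polynomial). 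Then $G^r$ is a real affine variety in a Euclidean space, and by Lemma~\ref{lemma1} the relations cut $\Hom(\Gamma,G)$ out as a real affine subvariety of $G^r$, hence of $\R^{N}$ for suitable $N$.

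The next step is to record that a real affine variety $Z\subseteq\R^N$ is in particular a closed semi-algebraic subset of $\R^N$: by Hilbert's basis theorem its vanishing ideal is finitely generated, so $Z$ is the common zero set of finitely many real polynomials, which is exactly a semi-algebraic set of the special form in the Definition (with no strict inequalities), and it is closed because a finite intersection of polynomial zero sets is closed. Now I would apply Proposition~\ref{Trian1} in the form of Remark~\ref{Trian2} to the single semi-algebraic set $M_1=Z$ (which is closed in $\R^N$), obtaining a simplicial complex $K$ and a semi-algebraic homeomorphism $k\colon|K|\to \overline{M_1}=Z$. Since $Z$ is already closed, $\overline{M_1}=Z$, so $k$ is a homeomorphism $|K|\cong Z$; taking $\Gamma$ arbitrary finitely generated and $G$ an arbitrary real linear algebraic group, this shows $\Hom(\Gamma,G)$ is triangulated, which is the claim.

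I do not expect a genuine obstacle here, since the corollary is an immediate combination of earlier statements; the only point requiring a line of care is the passage from "affine variety over $\R$" to "closed semi-algebraic set," i.e.\ invoking Hilbert's basis theorem so that Remark~\ref{Trian2}'s closedness hypothesis is verified. (If one prefers not to cite the unbounded version, an alternative is to note that $O(n)$, $U(n)$, $Sp(n)$ etc.\ are compact, so for compact $G$ one gets bounded $\Hom(\Gamma,G)$ and can quote Proposition~\ref{Trian1} directly; but for a general real linear algebraic group $G$ the variety $\Hom(\Gamma,G)$ need not be bounded, so Remark~\ref{Trian2} is the right tool.) The remaining details---that the defining relations really are polynomial, and that $\varphi^*$ is polynomial---are exactly what Lemma~\ref{lemma1} already provides, so nothing further is needed.
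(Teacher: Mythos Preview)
Your proposal is correct and follows essentially the same route as the paper: invoke Lemma~\ref{lemma1} to see that $\Hom(\Gamma,G)$ is a real affine variety, note (via Hilbert's basis theorem) that it is therefore a closed semi-algebraic set, and then apply the triangulation result Proposition~\ref{Trian1}/Remark~\ref{Trian2}. The paper states this corollary as an immediate consequence of those two ingredients with no further argument, so your write-up is, if anything, more detailed than what appears there.
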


\subsection{Simplicial Spaces and Homotopy Stable Decompositions}

Let $\Delta$ be category of finite sets $[n]=\{0,1,...,n\}$ with morphisms order preserving maps $f\colon[n]\to[m]$. It can be shown that all morphisms in this category are generated by composition of maps denoted $\text{\bf d}^i\colon [n-1]\to[n]$ and $\text{\bf s}^i\colon[n+1]\to[n]$ where $0\leq i\leq n$. This maps are determined by the relations 
\begin{align*}
\text{\bf d}^j\text{\bf d}^i&=\text{\bf d}^i\text{\bf d}^{j-1}\text{ if }i<j\\
\text{\bf s}^j\text{\bf s}^i&=\text{\bf s}^{i-1}\text{\bf s}^j\text{ if }i>j
\end{align*}
\[\text{\bf s}^j\text{\bf d}^i=\left\{
\begin{matrix}
\text{\bf d}^i\text{\bf s}^{j-1}&\text{if }i<j\\
Id&\text{if }i=j\text{ or }i=j+1\\
\text{\bf d}^{i-1}\text{\bf s}^j&\text{if }i>j+1.
\end{matrix}\right.\]
which are called cosimplicial identities. For any category $\bf{C}$, let ${\bf{C}}^{\text{op}}$ denote its opposite category. A functor \[X\colon\Delta^{\text{op}}\to\Top\] is called a simplicial space.
Here $\Top$ stands for $k$-spaces, i.e., topological spaces where each compactly closed subset is closed. We denote $X_n:=X([n])$ and the maps $d_i=X(\text{\bf d}^i)$ and $s_i=X(\text{\bf s}^i)$ are called face and degeneracy maps respectively.

Fix $n$. Define $S^0(X_n)=X_n$ and for $0<t\leq n$
\[S^t(X_n)=\bigcup_{J_{n,t}}s_{i_1}\circ\cdots\circ s_{i_t}(X_{n-t}),\]
where $s_{i_{j}}\colon X_{n-j}\to X_{n-j+1}$ is a degeneracy map, $1\leq i_1<\cdots< i_t\leq n$ is a sequence of $t$ numbers between $1$ and $n$, and $J_{n,t}$ stands for all possible sequences. This defines a decreasing filtration of $X_n$,
\[S^n(X_n)\subset S^{n-1}(X_n)\subset\cdots\subset S^{0}(X_n)=X_n.\]
For each $n$ there is a homotopy decomposition of $\Sigma X_n$ in terms of the quotient spaces $S^k(X_n)/S^{k+1}(X_n)$ with $k\leq n$. To do this we need the following. 

Let $A\subset Z$ be spaces. Recall that $(Z,A)$ is an NDR pair if there exist continuous functions
\[h\colon Z\times [0, 1]\to Z,\;\;\;u\colon Z\to[0, 1]\] such that the following conditions are satisfied:

1. $A= u^{-1}(0)$,

2. $h(z,0)=z$ for all $z\in Z$,

3. $h(a,t)=a$ for all $a\in A$ and all $t\in [0,1]$, and
 
4. $h(z, 1) \in A$ for all $z\in u^{-1}([0, 1))$.

\noindent Examples of NDR pairs are pairs consisting of CW-complexes and subcomplexes. Indeed, if $Z$ is a CW-complex and $A\subset Z$ a subcomplex, then the inclusion $A\hookrightarrow Z$ is a cofibration which is equivalent to a retraction $X\times I$ to $A\times I\cup X\times\{0\}$ relative to $A\times \{0\}$.

When $X$ is a simplicial space, we call $X$ \emph{simplicially} NDR if $(S^{t-1}(X_n),S^{t}(X_n))$ is an NDR pair for every $n$ and $t\geq 1$. The following result can be found in \cite[~Theorem 1.6]{Gitler}.

\begin{prop}\label{prop15}
Let $X$ be a simplicial space, and suppose $X$ is simplicially \emph{NDR}. Then for every $n\geq0$ there is a natural homotopy equivalence
\[\Theta(n)\colon\Sigma X_n\simeq\bigvee_{0\leq k\leq n}\Sigma(S^k(X_n)/S^{k+1}(X_n)).\] 
\end{prop}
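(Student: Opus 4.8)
The plan is to verify the hypotheses of the general stable-splitting machinery for filtered spaces, which says that if $Z = Z_0 \supset Z_1 \supset \cdots \supset Z_n$ is a decreasing filtration by closed subspaces such that each pair $(Z_{k-1}, Z_k)$ is an NDR pair, then $\Sigma Z$ splits, after suspension, as $\bigvee_{0\leq k\leq n}\Sigma(Z_k/Z_{k+1})$, naturally in such filtered spaces. Applied to the filtration $S^n(X_n)\subset\cdots\subset S^0(X_n)=X_n$, this is exactly the statement. Concretely, I would first recall (or cite \cite{Gitler}) the splitting for a single NDR pair $(Z,A)$: the cofibration $A\hookrightarrow Z$ gives, after one suspension, a stable splitting $\Sigma Z\simeq \Sigma A\vee \Sigma(Z/A)$, with the retraction $\Sigma Z\to\Sigma A$ built from the NDR data $(h,u)$ — the homotopy $h$ collapses $Z$ onto $A$ away from the support of $u$, and suspending kills the obstruction coming from the non-triviality of the collapse near $u^{-1}(0)$.

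The main step is then an induction on the length of the filtration. Assuming the splitting for filtrations of length $n-1$, I would apply it to the sub-filtration $S^n(X_n)\subset\cdots\subset S^1(X_n)$ to obtain
\[
\Sigma S^1(X_n)\simeq\bigvee_{1\leq k\leq n}\Sigma\bigl(S^k(X_n)/S^{k+1}(X_n)\bigr),
\]
and separately apply the single-pair splitting to the NDR pair $(S^0(X_n),S^1(X_n)) = (X_n, S^1(X_n))$ to get $\Sigma X_n\simeq \Sigma S^1(X_n)\vee \Sigma(X_n/S^1(X_n))$. Combining these two yields the desired wedge decomposition $\Sigma X_n\simeq\bigvee_{0\leq k\leq n}\Sigma(S^k(X_n)/S^{k+1}(X_n))$. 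The map $\Theta(n)$ is assembled from the collapse maps $\Sigma S^k(X_n)\to\Sigma(S^k(X_n)/S^{k+1}(X_n))$ composed with the retractions $\Sigma S^{k-1}(X_n)\to\Sigma S^k(X_n)$ supplied by the NDR structure at each stage.

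For naturality: a simplicial map $f\colon X\to Y$ of simplicially NDR simplicial spaces carries degeneracies to degeneracies, hence $f_n\bigl(S^t(X_n)\bigr)\subseteq S^t(Y_n)$ for all $t$, so $f_n$ is a filtered map; one then checks that the retractions and collapse maps can be chosen compatibly, which is where the NDR data must be handled with some care — in general the functions $(h,u)$ are not canonical, but the resulting stable splitting is natural up to the usual arguments (the collapse maps $Z\to Z/A$ are strictly natural, and the retractions, while depending on choices, induce the same map on the homotopy category). The step I expect to be the genuine obstacle is precisely this last point: making the word ``natural'' in the conclusion precise, since the NDR homotopies are not functorial. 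I would handle this by noting that the relevant naturality is naturality of $\Theta(n)$ as a morphism in the stable (or at least pointed homotopy) category, where it follows because each building block — the suspension of a collapse map — is natural on the nose, and the retraction splitting the cofibration $\Sigma A\to\Sigma Z$ is unique up to homotopy once it exists. Since the whole argument is carried out in \cite[Theorem 1.6]{Gitler}, for the present paper it suffices to invoke that reference after confirming that $S^t(X_n)$ is closed in $X_n$ and that the filtration has the stated NDR property — the content specific to this paper being only the verification, done in the next subsection, that $X=\Hom(L,G)$ is simplicially NDR.
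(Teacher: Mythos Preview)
The paper does not prove this proposition at all: it simply states the result and cites \cite[Theorem~1.6]{Gitler}. Your sketch --- splitting a single NDR pair after one suspension and then inducting up the filtration --- is the standard argument and is essentially what is carried out in that reference, and you correctly conclude that for the purposes of this paper invoking \cite{Gitler} is all that is required.
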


For each $n$, the map $\Theta(n)$ is natural with respect to morphisms of simplicial spaces, that is, natural transformations $X\to Y$.

\subsection{Cosimplicial Groups, 1-cocycles and $\Hom(L,G)$}

\begin{defi}
Let $\Grp$ denote the category of groups. A functor $L\colon\Delta\to\Grp$ is called a \emph{cosimplicial group}. The homomorphisms $d^i=L(\text{\bf d}^i)$ and $s^i=L(\text{\bf s}^i)$ are called coface and codegeneracy homomorphisms respectively. We say that $L$ is a \emph{finitely generated cosimplicial group} if each $L_n$ is finitely generated.
\end{defi}

There are two canonical finitely generated cosimplicial groups that arise from finitely generated free groups. 

\begin{defi}
Define $F\colon\Delta\to\Grp$ as follows: set $F_0=\{e\}$ and for $n\geq1$ let $F_n=\langle a_1,...,a_n\rangle$, the free group on $n$ generators. The coface homomorphisms $d^i\colon F_{n-1}\to F_n$ are given on the generators by 
\[d^0(a_j)=a_{j+1}\]
\[d^i(a_j)=\left\{
\begin{matrix}
a_j&j<i\\
a_ja_{j+1}&j=i\\
a_{j+1}&j>i
\end{matrix}
\right.\;\;\;\text{for }1\leq i\leq n-1\]
\[d^n(a_j)=a_j;\]
and the codegeneracy homomorphisms $s^i\colon F_{n+1}\to F_n$ by
\[s^i(a_j)=\left\{
\begin{matrix}
a_j&j\leq i\\
e&j=i+1\\
a_{j-1}&j>i+1
\end{matrix}
\right.\]
for $0\leq i\leq n$. 
\end{defi}

\begin{defi}
Define $\overline{F}\colon\Delta\to \Grp$ as $\overline{F}_n:=\langle a_0,...,a_n\rangle$ for any $n\geq0$; coface and codegeneracy homomorphisms $\overline{d}^i\colon \overline{F}_{n-1}\to \overline{F}_n$ and $\overline{s}^i\colon \overline{F}_{n+1}\to \overline{F}_n$ respectively, are given on the generators by 
\[\overline{d}^i(a_j)=\left\{\begin{matrix}
a_j&j<i\\
a_{j+1}&j\geq i
\end{matrix}
\right.\;\;\;\;\text{and }\;\;
\overline{s}^i(a_j)=\left\{\begin{matrix}
a_j&j\leq i\\
a_{j-1}&j>i
\end{matrix}
\right.\]
for all $0\leq i\leq n$. 
\end{defi}

\begin{defi}
We will say that a family of normal subgroups $K_n\subset F_n$ is \emph{compatible} with $F$, if $d^i(K_{n-1})\subset K_n$ and $s^i(K_{n+1})\subset K_n$ for all $n$ and all $i$. Similarly we define \emph{compatible} families of $\overline{F}$.
\end{defi}

Given $\{K_n\}_{n\geq0}$ a compatible family with $F$, we get induced homomorphisms
\[\xymatrix{F_{n-1}\ar[r]^{d^i}\ar[d]&F_n\ar[d]\\F_{n-1}/K_{n-1}\ar[r]^{{d^i}}&F_n/K_n}\;\;\;\;\;\;\;\xymatrix{F_{n+1}\ar[r]^{s^i}\ar[d]&F_n\ar[d]\\F_{n+1}/K_{n+1}\ar[r]^{{s^i}}&F_n/K_n.}\]
Define \[F/K \colon\Delta\to\Grp\] as $(F/K)_n=F_n/K_n$ with coface and codegeneracy maps the quotient homomorphisms $d^i$ and $s^i$ respectively. This way $F/K$ is a finitely generated cosimplicial group. Similarly, with a compatible family $\{\overline{K}_n\}_{n\geq0}$ of $\overline{F}$, we can define $\overline{F}/\overline{K}\colon\Delta\to\Grp$.

\begin{exam}\label{example 1}
We describe two families of finitely generated cosimplicial groups that can be constructed using $F/K$ and $\overline{F}/\overline{K}$ through the commutator subgroup. 

$\bullet$ Let $A$ be a group, define inductively $\Gamma^1(A)=A$ and $\Gamma^{q+1}(A)=[\Gamma^q(A),A]$ for $q>1$. The descending central series of $A$ is
\[\Gamma^{q}(A)\trianglelefteq\cdots\trianglelefteq \Gamma^{2}(A)\trianglelefteq \Gamma^{1}(A)=A.\]
Given a homomorphism of groups $\phi\colon A\to B$, $\phi[a,a^\prime]=[\phi(a),\phi(a^\prime)]$ for all $a,a^\prime$ in $A$, so that \[\phi(\Gamma^q(A))\subset \Gamma^q(B).\] Taking $A=F_n$, and denoting $\Gamma^q_n:=\Gamma^q(F_n)$, we have that the family of normal subgroups $\{\Gamma^q_n\}_{n\geq0}$ is compatible with $d_i$ and $s_i$. Thus we can define $F/\Gamma^q$ as $(F/\Gamma^q)_n=F_n/ \Gamma^q_n$ for all $q$ and $1\leq i\leq n$. In particular, for $q=2$, we obtain $F_n/\Gamma^2_n=\Z^n$ for all $n\geq0$.

$\bullet$ Another example using the commutator is the derived series of a group $A$:
\[A^{(q)}\trianglelefteq\cdots\trianglelefteq A^{(1)}\trianglelefteq A^{(0)}=A\]
where $A^{(i+1)}=[A^{(i)},A^{(i)}]$. Again, $\phi(A^{(q)})\subset B^{(q)}$ for any homomorphism $\phi\colon A\to B$. Thus $F/F^{(q)}$, where $(F/F^{(q)})_n=F_n/F_n^{(q)}$ defines a finitely generated cosimplicial group. 

Similarly, $F^{(q)}_{n+1},\Gamma^q_{n+1}\subset \overline{F}_n$ define compatible families of $\overline{F}$ and we obtain the finitely generated cosimplicial groups $\overline{F}/\Gamma^q_{*+1}$ and $\overline{F}/F^{(q)}_{*+1}$.
\end{exam}

\begin{exam}\label{symm}
Here is one example of a cosimplicial group that does not come from a compatible family. Let $L_0=\Sigma_2=\langle \tau\rangle$, $L_1=\Sigma_3=\langle\sigma_1,\sigma_2\rangle$ and define coface homomorphisms
\[
\xymatrix{
L_0 \ar@<-1.05ex>[r]^{d^0} \ar@<1.05ex>[r] ^{d^1}&L_1
}
\]
as $d^0(\tau)=\sigma_2$ and $d^1(\tau)=\sigma_1$. The codegeneracy homomorphism $s_0\colon L_1\to L_0$ is given by $s^0(\sigma_1)=s^0(\sigma_2)=\tau$. This defines a 1-truncated cosimplicial group which we denote by $\Sigma_{2,3}$, that is, a functor $\Sigma_{2,3}\colon\Delta_{\leq 1}\to \Grp$. Here $\Delta_{\leq 1}$ stands for the full subcategory of $\Delta$ with objects $[0]$ and $[1]$. We can extend $\Sigma_{2,3}$ to $\Delta$ by using its left Kan extension. 

For our purposes we describe the second stage of this extension: We have that
\[L_2=\langle a,b,c\:|\;a^2=b^2=c^2,\;aba=bab,\;aca=cac,\;bcb=cbc\rangle,\]
coface homomorphisms
\[
\xymatrix{
L_1 \ar@<-2.1ex>[r]^{d^{0}} \ar[r]^{d^1} \ar@<2.1ex>[r]^{d^2}& L_2
}
\]
are given by 
\begin{align*}
d^0(\sigma_1)&=a&d^1(\sigma_1)&=c&d^2(\sigma_1)&=c\\
d^0(\sigma_2)&=b&d^1(\sigma_2)&=b& d^2(\sigma_2)&=a
\end{align*} 
and codegeneracy homomorphisms
\[\xymatrix{
L_1& L_2  \ar@<-1.05ex>[l]_{s^1} \ar@<1.05ex>[l]_{s^0} 
}\]
by 
\begin{align*}
s^0(a)=s^0(c)&=\sigma_1&s^1(a)=s^1(b)&=\sigma_2\\
s^0(b)&=\sigma_2&s^1(c)&=\sigma_1.
 \end{align*}

\end{exam}

\begin{remark}
The symmetric groups $\Sigma_n$ can not be assembled all together as a cosimplicial group. This is because there are no surjective homomorphisms $\Sigma_n\to\Sigma_{n-1}$ for $n\geq 5$ to use as codegeneracy homomorphisms. Indeed, given a homomorphism $\varphi\colon\Sigma_n\to \Sigma_{n-1}$, $\ker \varphi$ is a normal subgroup of $\Sigma_n$, that is $A_n$ or $\Sigma_n$. Thus the image of $\varphi$ is either the identity element or a subgroup of order 2.
\end{remark}

We describe another method of constructing new cosimplicial groups that arise from a given one. To do this, we recall a concept that was originaly introduced in \cite[~p. 284]{Bous} to define cohomotopy groups (and pointed sets) for a cosimplicial group.

\begin{defi}
Let $L$ be a cosimplicial group. The elemets $b$ in $L_1$ satisfying 
\begin{align}\label{eq1} 
d^2(b)d^0(b)&=d^1(b)
\end{align}
are called 1-cocycles of $L$. The set of 1-cocycles is denoted $Z^1(L)$.
\end{defi}

If $b$ is a 1-cocycle, then applying $s^0$ to equation \ref{eq1}, we obtain $s^0d^2(b)=e$ and using the cosimplicial identities, $d^1s^0(b)=e$, which implies $b\in\ker s^0$. Define inductively $b_n\in L_n$ as $b_{n+1}=d^{n+1}(b_n)$, where $b_1:=b$. These elements will satisfy 
\begin{align}
d^2(b_n)d^0(b_n)=&d^1(b_n) \text{ and}\label{eq2}\\
b_n\in& \ker s^0 \label{eq3}
\end{align}
for all $n\geq 1$. Given a 1-cocycle $b$ we build a new cosimplicial group. \\

{\bf Construction of $L^b$:} Define $L^b\colon\Delta \to \Grp$ as follows. For each $n\geq0 $,  $L^b_n:=\overline{F}_0*L_n$ with codegenarcy homomorphisms $s^i_b:=Id*s^i$, $i\geq 0$. The coface homomorphisms are $d^i_b:=Id*d^i$ for $i>0$. To define $d^0_b$ consider the homomorphism $k_n\colon \overline{F}_0\to \overline{F}_0*L_n$ given by $k_n(a_0)=a_0b_n$ for all $n\geq 0$, then $d^0_b:=k_n*d^0$. There is a canonical inclusion \[\iota_b\colon L\hookrightarrow L^b\] induced by the inclusions $L_n\hookrightarrow \overline{F}_0*L_n$.

\begin{exam}
$\bullet$ When $b=e$, $L^e=\overline{F}_0*L$, where $\overline{F}_0$ represents the constant cosimplicial group with value $\overline{F}_0$.

$\bullet$ Consider the finitely generated free cosimplicial group $F$. The codegeneracy homomorphism $s^0\colon F_1\to F_0$ is the constant map and thus $\ker s^0=F_1=\langle a_1\rangle$. Also \[d^1(a_1)=a_1a_2=d^2(a_1)d^0(a_1),\] and hence $a_1\in Z^1(F)$. Note that any other power of $a_1$ will fail to satisfy the cocycle condition (\ref{eq1}), that is $Z^1(F)=\{e,a_1\}$. Let $F^+=F^{a_1}$. We denote the canonical inclusion as $\iota_+\colon F\hookrightarrow F^+$. A similar argument shows that $Z^1(F/\Gamma^q)=\{e,a_1\}$ for $q>2$. We also denote $(F/\Gamma^q)^{a_1}=F/{\Gamma^q}^+$ and $\iota_+\colon F/\Gamma^q\hookrightarrow F/{\Gamma^q}^+$.

$\bullet$ Consider $F/\Gamma^2$. As in the previous example, $\ker s_0=\langle a_1\rangle$, but since $F_2/\Gamma^2_2=\Z^2$ all powers of $a_1$ will satisfy the cocycle condition, that is, $Z^1(F/\Gamma^2)=\Z$. Thus for each positive $m\in \Z$ we get non isomorphic cosimplical groups $(F/\Gamma^2)^m$ and inclusions $\iota_m\colon F/\Gamma^2\hookrightarrow (F/\Gamma^2)^m$. When $m=1$, we denote $(F/\Gamma^2)^1=F/{\Gamma^2}^+$.

$\bullet$ Consider $\Sigma_{2,3}$ defined in Example \ref{symm}. The product $\sigma_1\sigma_2\in (\Sigma_{2,3})_1$ satisfies
\[d^2(\sigma_1\sigma_2)d^0(\sigma_1\sigma_2)=caab=cb=d^1(\sigma_1\sigma_2)\]
and thus $\sigma_1\sigma_2$ is a 1-cocycle and we get the cosimplicial group $\Sigma_{2,3}^{\sigma_1\sigma_2}$.
\end{exam}

Now we turn our attention to spaces of homomorphisms. For any topological group $G$, its underlying group structure defines the functor \[\Hom_{\Grp}(\_,G)\colon \Grp^{\text{op}}\to \Set.\] If $L$ is a cosimplicial group, the composition of functors $\Hom_{\Grp}(\_,G)L$ which we denote by $\Hom(L,G)$ defines a simplicial set. Whenever $L$ is finitely generated, for each $n$ we can topologize $\Hom(L_n,G)$ in a way that the induced face and degeneracy maps are continuous. Therefore we get the simplicial space $\Hom(L,G)\colon \Delta^{\text{op}}\to \Top$. We list some known simplicial spaces.

 $\bullet$ $\Hom(F,G)=NG$ the nerve of $G$ as a category with one object; 
 
 $\bullet$ $\Hom(F^+,G)=(EG)_*$, Steenrod's model for the total space of the universal principal $G$-bundle $p\colon EG\to BG$, where $p$ is induced by the simplicial map $\iota_+^*\colon \Hom(F^+,G)\to\Hom(F,G)$;
 
 $\bullet$ $\Hom(F/\Gamma^q,G)=(B(q,G))_*$ the underlying simplicial space of the classifying space $B(q,G)$ defined in \cite{Ad5}, \cite{Ad1} (for $q=2$) and \cite{Ad3};
 
 $\bullet$ $\Hom(F/{\Gamma^q}^+,G)=(E(q,G))_*$ the underlying simplicial space of the total space of the universal bundle $p\colon E(q,G)\to B(q,G)$ also defined in \cite{Ad5}, \cite{Ad1} (for $q=2$) and \cite{Ad3}. Again, $p$ is induced by $\iota_+^*\colon\Hom(F/{\Gamma^q}^+,G)\to\Hom(F/\Gamma^q,G)$;
 
 $\bullet$ $\Hom(\overline{F},G)=N\overline{G}$, the nerve of the category $\overline{G}$ that has $G$ as space of objects and there is a unique morphism between any two objects.

\begin{remark} Consider the morphism of cosimplicial groups $\gamma\colon F\to\overline{F}$ given on generators by $\gamma^n(a_i)=a_{i-1}a_i^{-1}$, where $\gamma^n\colon F_n\to \overline{F}_n$. Let $G$ be a topological group. The induced map $\gamma\colon N\overline{G}\to NG$ is the underlying simplicial map of Segal's fat geometric realization model for the universal $G$-bundle.
\end{remark}

\subsection{Homotopy Stable Decomposition of $\Hom(L_n,G)$}

\begin{lemma}\label{Lemma0}
Let $G$ be a linear algebraic group and $L$ a finitely generated cosimplicial group. Let $s^i\colon L_{n+1}\to L_n$ be a codegeneracy map. Then, the image of  $s_i:=(s^i)^*\colon \Hom(L_n,G)\to\Hom(L_{n+1},G)$ is a subvariety for all $0\leq i\leq n$. 
\end{lemma}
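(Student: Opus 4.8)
The plan is to identify the image of $s_i = (s^i)^*$ with the zero locus of a suitable system of polynomial equations inside $\Hom(L_{n+1},G)$, using the fact that the codegeneracy $s^i\colon L_{n+1}\to L_n$ admits a section among the coface maps. Concretely, by the cosimplicial identities we have $s^i d^i = \mathrm{Id}_{L_n}$ (and likewise $s^i d^{i+1} = \mathrm{Id}$), so $d^i\colon L_n \to L_{n+1}$ is a section of $s^i$. Passing to homomorphism spaces and using Lemma \ref{lemma1}, both $(d^i)^*\colon \Hom(L_{n+1},G)\to\Hom(L_n,G)$ and $s_i=(s^i)^*\colon\Hom(L_n,G)\to\Hom(L_{n+1},G)$ are polynomial maps, and $(d^i)^* \circ s_i = \mathrm{id}$. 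Hence $e := s_i\circ (d^i)^*\colon \Hom(L_{n+1},G)\to\Hom(L_{n+1},G)$ is a polynomial map which is idempotent ($e\circ e = e$) and whose image is exactly the image of $s_i$ (since $s_i$ is injective with one-sided inverse $(d^i)^*$, its image equals the fixed-point set of $e$).

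The key step is then to observe that the image of $s_i$ coincides with the equalizer $\{\,\rho \in \Hom(L_{n+1},G) : e(\rho) = \rho\,\}$. Indeed, if $\rho$ is in the image, $\rho = s_i(\eta)$, then $(d^i)^*(\rho) = \eta$ and $e(\rho) = s_i(\eta) = \rho$; conversely if $e(\rho)=\rho$ then $\rho = s_i((d^i)^*(\rho))$ lies in the image. Now write $e = (e_1,\dots,e_N)$ in the ambient coordinates (recall $\Hom(L_{n+1},G)\subset G^{m}\subset \R^N$ for appropriate $m$, with $G$ a matrix group), where each $e_j$ is a polynomial in the coordinate functions. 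The fixed-point set is cut out inside the affine variety $\Hom(L_{n+1},G)$ by the finitely many polynomial equations $e_j(x) - x_j = 0$, $1\le j\le N$. Adjoining these to the defining equations of $\Hom(L_{n+1},G)$ exhibits the image of $s_i$ as the common zero locus of a finite family of real polynomials, hence as an affine subvariety of $\Hom(L_{n+1},G)$, as desired.

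I expect the only genuinely substantive point to be the verification that $d^i$ (or $d^{i+1}$) really is a section of $s^i$ at the level of the cosimplicial group $L$ — this is immediate from the cosimplicial identity $\text{\bf s}^j\text{\bf d}^i = Id$ when $i=j$ or $i=j+1$, applied through the functor $L$ — and the attendant bookkeeping that applying the contravariant functor $\Hom_{\Grp}(\_,G)$ turns this into $(d^i)^*\circ s_i = \mathrm{id}$ rather than the other composite. Everything else is formal: polynomiality of coface- and codegeneracy-induced maps is Lemma \ref{lemma1}, idempotents compose as expected, and the fixed locus of a polynomial self-map of an affine variety is visibly an affine subvariety. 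One small care point is to make sure the indexing matches the convention in the statement, where $s^i\colon L_{n+1}\to L_n$ with $0\le i\le n$, so that $d^i\colon L_n\to L_{n+1}$ indeed lands in the right level; this is routine. No obstacle of a conceptual nature is anticipated.
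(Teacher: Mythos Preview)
Your argument is correct and complete. The paper, however, takes a slightly different (and somewhat more direct) route: rather than exploiting the section $d^i$ to build an idempotent, it uses only the \emph{surjectivity} of $s^i$ (which of course follows from $s^i d^i=\mathrm{Id}$). Since $s^i$ is onto, $L_n\cong L_{n+1}/\ker s^i$, and if $\ker s^i=\langle\{b_\alpha\}_{\alpha\in\Lambda}\rangle$ with each $b_\alpha$ a word in the generators $a_1,\dots,a_r$ of $L_{n+1}$, then a homomorphism $\rho\colon L_{n+1}\to G$ lies in the image of $s_i$ iff it factors through the quotient, i.e.\ iff $\rho(b_\alpha)=I$ for all $\alpha$. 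These conditions are polynomial (products and inverses in $G$ being polynomial), so the image is cut out as a subvariety. Your idempotent trick and the paper's kernel description are really two sides of the same coin: the fixed-point equations $e(\rho)=\rho$ and the kernel equations $\rho(b_\alpha)=I$ both encode ``$\rho$ factors through $L_n$.'' The paper's version is a bit more hands-on (one sees the defining polynomials explicitly), while yours is cleaner in that it avoids choosing generators for $\ker s^i$ and makes the role of the cosimplicial identity transparent; it would also port verbatim to any setting where induced maps are morphisms in a category with equalizers.
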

\begin{proof}
Suppose $L_{n+1}=\langle a_1,...,a_r\rangle$. The homomorphism $s^i\colon L_{n+1}\to L_n$ is surjective so that $L_n\cong L_{n+1}/\ker s^i$. We can describe $\ker s^i=\langle \{b_\alpha\}_{\alpha\in\Lambda}\rangle$ where each $b_\alpha$ is a fixed product of powers of generators $a_k$. Let $ \rho\colon L_{n+ 1}\to G$ be a homomorphism. Then $(\rho(a_1),...,\rho(a_r))$ is in $ s_i(\Hom(L_n,G))$ if and only if $\rho(b_\alpha)=I$ for all $\alpha\in\Lambda$. That is, these $r$-tuples in $\Hom(L_{n+1},G)$ are determined by the polynomial equations $\{\rho(b_\alpha)=I\}_{\alpha\in\Lambda}$ and hence they build up an affine variety.  
\end{proof}

For a cosimplicial group $L$, denote $S_t(L_k,G):=S^t(\Hom(L_k,G))$.

\begin{theorem}\label{prop16}
Let $G$ be a real algebraic linear group, and $L$ a finitely generated cosimplicial group. Then for each $n$ we have homotopy equivalences
\[\Theta(n)\colon\Sigma \Hom(L_n,G)\simeq\bigvee_{0\leq k\leq n}\Sigma (S_k(L_n,G)/S_{k+1}(L_n,G)).\] 
\end{theorem}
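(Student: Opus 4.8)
The plan is to verify the hypothesis of Proposition \ref{prop15} for the simplicial space $X=\Hom(L,G)$: namely, that $X$ is simplicially NDR, so that each pair $(S^{t-1}(X_n),S^t(X_n))$ is an NDR pair. Once this is established, Proposition \ref{prop15} delivers the natural homotopy equivalence $\Theta(n)$ immediately, with $S_k(L_n,G)$ being our notation $S^k(\Hom(L_n,G))$. So the entire content of the proof is the simplicially-NDR property.

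First I would fix $n$ and analyze the filtration $S^n(X_n)\subset\cdots\subset S^0(X_n)=X_n$. By Lemma \ref{lemma1}, $X_n=\Hom(L_n,G)$ is a real affine variety, and each degeneracy map $s_i=(s^i)^*$ is a polynomial map (Lemma \ref{lemma1} again). The key structural observation is that each $S^t(X_n)$ is a real affine subvariety of $X_n$: it is a finite union $\bigcup_{J_{n,t}} s_{i_1}\cdots s_{i_t}(X_{n-t})$, and by Lemma \ref{Lemma0} (applied iteratively, using that a composite of codegeneracies is again dual to a surjection of finitely generated groups, hence each image $s_{i_1}\cdots s_{i_t}(X_{n-t})$ is cut out by polynomial equations $\rho(b_\alpha)=I$ inside $X_n$) each such image is a subvariety; a finite union of subvarieties of $X_n$ is again a subvariety. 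So we obtain a finite nested system of real affine varieties $X_n\supset S^1(X_n)\supset\cdots\supset S^n(X_n)$, all sitting inside some $\R^N$, all closed.

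Next I would invoke the triangulation results: by Proposition \ref{Trian1} together with Remark \ref{Trian2} (the unbounded, closed version from \cite[Theorem 2.12]{Park}), the finite system of closed semi-algebraic sets $\{S^t(X_n)\}_{0\le t\le n}$ can be \emph{simultaneously} triangulated — there is a simplicial complex $K$ and a homeomorphism $|K|\cong X_n$ under which each $S^t(X_n)$ corresponds to the geometric realization of a subcomplex $K_t\subseteq K$. Then for each $t$, $(S^{t-1}(X_n),S^t(X_n))$ is homeomorphic to a CW-pair $(|K_{t-1}|,|K_t|)$, and as recalled in the excerpt such pairs are NDR pairs. Since this holds for all $n$ and all $t\ge 1$, $X=\Hom(L,G)$ is simplicially NDR, and Proposition \ref{prop15} finishes the proof.

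The main obstacle is the claim that each $S^t(X_n)$ is genuinely a subvariety (equivalently a closed semi-algebraic set), and in particular that the \emph{image} of a composite of degeneracies is closed. Lemma \ref{Lemma0} does exactly this for a single codegeneracy, identifying the image with the locus where certain words in the generators evaluate to $I$; the point is that the analogous description survives composition, because a composite $s^{i_1}\circ\cdots\circ s^{i_t}$ of codegeneracies $L_{n+1}\cdots L_{n-t+?}$ is still surjective with kernel generated by a known finite set of words, so its dual has image cut out by finitely many polynomial equations. One must be careful to index the composites so they compose (the indices $1\le i_1<\cdots<i_t\le n$ in $J_{n,t}$ are chosen precisely so that $s_{i_1}\circ\cdots\circ s_{i_t}$ makes sense as a map $X_{n-t}\to X_n$), and to note that a finite union of Zariski-closed sets is Zariski-closed, hence semi-algebraic and closed in the euclidean topology. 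Everything else is a direct appeal to the cited triangulation theorem and the standard fact that CW-subcomplex inclusions are cofibrations.
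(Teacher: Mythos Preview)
Your proposal is correct and follows essentially the same route as the paper's proof: show each $S_t(L_n,G)$ is an affine subvariety (via Lemma \ref{Lemma0}), invoke the simultaneous triangulation of Remark \ref{Trian2} to realize the filtration by subcomplexes, and conclude the NDR property so that Proposition \ref{prop15} applies. Your parenthetical about handling composites of degeneracies---observing that a composite of codegeneracies is still a surjection of finitely generated groups, so the argument of Lemma \ref{Lemma0} applies verbatim to the composite---is in fact a little more careful than the paper's one-line appeal to Lemma \ref{Lemma0}, but the overall strategy is identical.
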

\begin{proof}
Fix $n$. Using Proposition \ref{prop15}, we only need to show that $(S_{t-1}(L_n,G),S_t(L_n,G))$ is a strong NDR-pair for all $0\leq t\leq n$. By Lemma \ref{Lemma0}, each $s_j(\Hom(L_k,G))$ is an affine variety for all $0\leq j,k\leq n$. Then, for all $t\geq 1$ the finite union \[S_t(L_n,G)=\bigcup_{J_{n,t}}s_{i_1}\circ\cdots\circ s_{i_t}(\Hom(L_{n-t},G))\] 
is also an affine variety. Consider the natural filtration
\[S_n(L_n,G)\subset S_{n-1}(L_n,G)\subset\cdots\subset S_{0}(L_n,G)=\Hom(L_n,G).\]
The union $\bigcup_tS_t(L_n,G)=\Hom(L_n,G)$ is an affine variety, and therefore is a closed subspace of some euclidean space. By Remark \ref{Trian2}, $\Hom(L_n,G)$ can be triangulated in a way that each $S_t(L_n,G)$ is a finite union of interiors of simplices. Since $S_t(L_n,G)$ are closed subspaces, it follows that under the triangulation they are subcomplexes. This way the inclusions $S_{t}(L_n,G)\subset S_{t-1}(L_n,G)$ are cofibrations and hence NDR-pairs. Therefore $\Hom(L,G)$ is simplicially NDR.
\end{proof}

\begin{lemma}\label{Lemma1}
Let $G$ be a topological group and consider the cosimplicial group $F/\Gamma^q$. Then
\[S_k(F_n/\Gamma^q_n,G)/S_{k+1}(F_n/\Gamma^q_n,G)\cong \bigvee^{\binom{n}{k}}\Hom(F_k/\Gamma^q_k,G)/S_1(F_k/\Gamma^q_k,G)\]
for all $1\leq k\leq n$.
\end{lemma}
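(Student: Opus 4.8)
The plan is to analyze the degeneracy maps of the simplicial space $\Hom(F/\Gamma^q,G)$ explicitly and match them with the combinatorial structure of the cosimplicial group $F$. Recall that $\Hom(F_n/\Gamma^q_n,G)$ sits inside $G^n$ as the tuples $(g_1,\dots,g_n)$ whose coordinates satisfy all length-$q$ iterated commutator relations, i.e.\ all nested commutators of weight $\geq q$ are trivial. The degeneracy map $s_i = (s^i)^*$ is induced by $s^i\colon F_{n+1}/\Gamma^q_{n+1}\to F_n/\Gamma^q_n$, which on generators sends $a_j\mapsto a_j$ for $j\le i$, $a_{i+1}\mapsto e$, and $a_j\mapsto a_{j-1}$ for $j>i+1$. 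Dually, $s_i\colon \Hom(F_n/\Gamma^q_n,G)\to\Hom(F_{n+1}/\Gamma^q_{n+1},G)$ is the map that inserts the identity $I$ in the $(i+1)$-st slot of a tuple. Therefore, for $\Hom(F/\Gamma^q,G)$, the subspace $S^t$ of $\Hom(F_n/\Gamma^q_n,G)$ is exactly the set of tuples in $G^n$ (satisfying the relations) having at least $t$ coordinates equal to $I$, since iterating the degeneracies $s_{i_1}\circ\cdots\circ s_{i_t}$ with $i_1<\cdots<i_t$ produces tuples with the identity placed in $t$ prescribed positions.

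The key step is then a ``set-of-supports'' decomposition: for each subset $T\subseteq\{1,\dots,n\}$ with $|T|=n-t$, let $V_T$ denote the closed subspace of $S^t = S_t(F_n/\Gamma^q_n,G)$ consisting of tuples whose coordinates outside $T$ are all $I$. Each $V_T$ is canonically homeomorphic to $\Hom(F_{n-t}/\Gamma^q_{n-t},G)$: deleting the forced-identity coordinates gives a tuple of length $n-t$, and because the commutator relations defining $\Hom(F_n/\Gamma^q_n,G)$ restrict, when the off-$T$ coordinates are $I$, to precisely the weight-$q$ commutator relations on the remaining $n-t$ coordinates, this is a bijective map of affine varieties, hence a homeomorphism. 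Here I would record the matching $k = n-t$ from the statement, so $|T| = k$ and there are $\binom{n}{k}$ such subsets $T$. Now $S^{t+1}\cap V_T$ is the locus in $V_T$ where at least one \emph{further} coordinate (necessarily one indexed by $T$) equals $I$, which under the above homeomorphism is exactly $S_1(F_k/\Gamma^q_k,G)$, the tuples of length $k$ with at least one entry $I$. The quotient $S^t/S^{t+1}$ is obtained by collapsing $S^{t+1}$; since $S^t = \bigcup_{|T|=k} V_T$ and any two distinct $V_T, V_{T'}$ intersect inside $S^{t+1}$ (a tuple supported on both $T$ and $T'$ has support in $T\cap T'$, which has size $<k$, hence lies in $S^{t+1}$), collapsing $S^{t+1}$ glues all the $V_T$ together along a single point, yielding the wedge $\bigvee^{\binom{n}{k}} V_T/(S^{t+1}\cap V_T) \cong \bigvee^{\binom{n}{k}}\Hom(F_k/\Gamma^q_k,G)/S_1(F_k/\Gamma^q_k,G)$.

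I expect the main obstacle to be the bookkeeping in the last step: verifying carefully that $S^t$ is the union of the $V_T$ with pairwise intersections contained in $S^{t+1}$, so that the quotient genuinely splits as a one-point wedge rather than something more entangled, and confirming that the quotient topology behaves well (which is fine here since everything is compact Hausdorff, or at least a finite CW pair by Theorem~\ref{prop16}). A secondary point requiring care is the identification $S^{t+1}\cap V_T \cong S_1(F_k/\Gamma^q_k,G)$: I must check that ``at least one more identity coordinate'' on the $T$-indexed slots corresponds under coordinate-deletion to ``at least one identity coordinate among all $k$ slots'', which is immediate from the construction but should be stated. The restriction-of-relations claim — that setting some generators to $e$ in $F_n/\Gamma^q_n$ recovers $F_k/\Gamma^q_k$ on the survivors — follows since $F_k/\Gamma^q_k$ is the quotient of the free group on the surviving generators by weight-$q$ commutators, and this is precisely the quotient of $F_n/\Gamma^q_n$ by the normal subgroup generated by the killed generators; this is where the specific form of the $F/\Gamma^q$ codegeneracies (killing one generator at a time) is essential, and is why the lemma is stated for this cosimplicial group rather than a general $L$.
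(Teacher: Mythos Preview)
Your proof is correct and takes essentially the same approach as the paper. The paper packages the argument as an explicit map $\eta_n$ assembled from the coordinate projections $P_{i_1,\dots,i_{n-k}}\colon G^n\to G^{n-k}$, with inverse given by the wedge of iterated degeneracies $s_{j_1}\circ\cdots\circ s_{j_k}$; your covering $S^t=\bigcup_{|T|=n-t} V_T$ with pairwise intersections in $S^{t+1}$ is the dual description of the same homeomorphism, and your reindexing $k=n-t$ matches the paper's (note the statement of the lemma has an index slip, writing $F_k$ where the proof produces $F_{n-k}$, but both you and the paper compute the correct object).
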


\begin{proof}
Let $1\leq i_1<\cdots< i_{n-k}\leq n$. Consider the projections
\[P_{i_1,...,i_m}\colon G^n\to G^{n-k} \]
given by $(x_1,...,x_n)\mapsto(x_{i_1},...,x_{i_{n-k}})$. We claim that the image of $\Hom(F_n/\Gamma^q_n,G)$ under this projections lies on $\Hom(F_{n-k}/\Gamma^q_{n-k},G)$. Indeed, each projection $P_{i_1,...,i_m}$ is induced by the homomorphism $\varphi\colon F_{n-k}\to F_n$ given on generators as $a_j=a_{i_j}$. Since $\varphi(\Gamma^q_{n-k})\subset \Gamma^q_n$ we get the homomorphism $\overline{\varphi}\colon F_{n-k}/\Gamma^q_{n-k}\to F_n/\Gamma^q_n$ which proves our claim. Assemble the restrictions of $P_{i_1,...,i_m}$ to $\Hom(F_n/\Gamma^q_n,G)$ so that we build up a continuous map
\[\eta_n\colon \Hom(F_n/\Gamma^q_n,G)\to\prod_{J_{n,k}}\Hom(F_{n-k}/\Gamma^q_{n-k},G)\]
given by
\[(x_1,...,x_n)\mapsto \{P_{i_1,...,i_m}(x_1,...,x_n)\}_{(i_1,...,i_{n-k})\in J_{n,k}}\]
where $J_{n,k}$ runs over all possible sequences of length $n-k$, $1\leq i_1<\cdots< i_{n-k}\leq n$. Since all sequences $(i_1,...,i_{n-k})\in J_{n,k}$ are disjoint, the restriction
\[\eta_n|\colon S_k(F_n/\Gamma^q_n,G)\to\bigvee_{J_{n,k}}\Hom(F_{n-k}/\Gamma^q_{n-k},G)/S_1(F_k/\Gamma^q_k,G)\]
has a continuous inverse $\bigvee_{J_{n,k}}s_{j_1}\circ\cdots\circ s_{j_k}$ where $1\leq j_1<\cdots<j_k\leq n$ and the intersection $\{j_1,...,j_k\}\cap\{i_1,...,i_{n-k}\}=\emptyset$. Therefore $\eta_n|$ is a homeomorphism. Finally note that $S_{k+1}(F_n/K_n,G)$ is mapped to $\bigvee S_1(F_{n-k}/\Gamma^q_{n-k},G)$. Taking quotients we get the desired homeomorphism.
\end{proof}

The next corollary was first conjectured in \cite[p.~12]{Ad5} for closed subgroups of $GL_n(\C)$. Since any real linear algebraic group is Zariski closed we have the following version of the conjecture which follows from Theorem \ref{prop16} and Lemma \ref{Lemma1}.

\begin{corl}\label{Corl1}
If $G$ is a Zariski closed subgroup of $GL_n(\C)$, then there are homotopy equivalences for the cosimplicial group $F/\Gamma^q$,
\[\Theta(n)\colon\Sigma \Hom(F_n/\Gamma^q_n,G)\simeq\bigvee_{1\leq k\leq n}\Sigma\left(\bigvee^{\binom{n}{k}}\Hom(F_k/\Gamma^q_k,G)/S_1(F_k/\Gamma^q_k,G)\right)\]  
for all $n$ and $q$. 
\end{corl}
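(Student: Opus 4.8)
The plan is to deduce Corollary \ref{Corl1} by simply feeding Lemma \ref{Lemma1} into the decomposition already established in Theorem \ref{prop16}, specialized to the cosimplicial group $L=F/\Gamma^q$. First I would note that any Zariski closed subgroup $G$ of $GL_n(\C)$ is in particular a real linear algebraic group: realize $\C^n$ as $\R^{2n}$, so $GL_n(\C)\subset GL_{2n}(\R)$ is cut out by polynomial equations (the Cauchy--Riemann-type linear conditions on matrix entries), and intersecting with the Zariski closed $G$ keeps the defining ideal polynomial. Hence the hypotheses of Theorem \ref{prop16} are met with $L=F/\Gamma^q$, which is a finitely generated cosimplicial group by Example \ref{example 1}.

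Next I would invoke Theorem \ref{prop16} to obtain, for each $n$ and each $q$, the natural homotopy equivalence
\[\Theta(n)\colon\Sigma \Hom(F_n/\Gamma^q_n,G)\simeq\bigvee_{0\leq k\leq n}\Sigma\bigl(S_k(F_n/\Gamma^q_n,G)/S_{k+1}(F_n/\Gamma^q_n,G)\bigr).\]
Then I would discard the $k=0$ summand: since $\Hom(F_0/\Gamma^q_0,G)=\Hom(\{e\},G)$ is a point, $S_0(F_n/\Gamma^q_n,G)=\Hom(F_n/\Gamma^q_n,G)$ has a canonical basepoint (the trivial representation, which is the image of the unique point of $\Hom(F_0/\Gamma^q_0,G)$ under iterated degeneracies), so for $k=0$ the quotient $S_0/S_1$ contributes the term corresponding to $\binom{n}{0}$ copies of a point modulo itself, i.e. a point, whose reduced suspension is trivial and drops out of the wedge. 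Alternatively one simply observes that Lemma \ref{Lemma1} already handles $1\le k\le n$ and the $k=0$ term is a based point. For each $k$ with $1\leq k\leq n$, substitute the homeomorphism of Lemma \ref{Lemma1},
\[S_k(F_n/\Gamma^q_n,G)/S_{k+1}(F_n/\Gamma^q_n,G)\cong \bigvee^{\binom{n}{k}}\Hom(F_k/\Gamma^q_k,G)/S_1(F_k/\Gamma^q_k,G),\]
and commute the suspension past the wedge (suspension preserves wedges up to natural homeomorphism). Collecting terms yields exactly the asserted equivalence.

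There is essentially no obstacle here: the corollary is a formal consequence of two results proved earlier in the section. The only points requiring a sentence of care are (i) checking that ``Zariski closed in $GL_n(\C)$'' implies ``real linear algebraic group,'' so that Theorem \ref{prop16} applies — this is the remark made just before the corollary statement, that any real linear algebraic group is Zariski closed, read in the direction needed here; and (ii) the vanishing/removal of the $k=0$ summand, which is harmless because that quotient is a point. I would also remark, as the excerpt does, that naturality of $\Theta(n)$ is inherited from Proposition \ref{prop15}, so the equivalence is compatible with the cosimplicial structure maps of $F/\Gamma^q$. No further computation is needed.
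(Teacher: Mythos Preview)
Your approach is exactly the paper's: the corollary is asserted to follow immediately from Theorem \ref{prop16} together with Lemma \ref{Lemma1}, once one notes that a Zariski closed subgroup of $GL_n(\C)$ is a real linear algebraic group.

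There is one indexing slip in your bookkeeping. By definition $S_0(L_n,G)=\Hom(L_n,G)$, so the $k=0$ quotient $S_0/S_1$ is $\Hom(F_n/\Gamma^q_n,G)/S_1(F_n/\Gamma^q_n,G)$, which is the top wedge summand in the corollary, not a point. The summand that genuinely collapses is $k=n$: $S_n(L_n,G)$ is the image of $\Hom(F_0/\Gamma^q_0,G)=\{*\}$ under $n$ degeneracies, hence a single point, and its suspension drops out of the wedge. Part of the confusion stems from an index typo in Lemma \ref{Lemma1} (the right-hand side should read $F_{n-k}$ rather than $F_k$, as is visible in its proof); with that correction, substituting Lemma \ref{Lemma1} for $0\le k\le n-1$ and discarding the trivial $k=n$ term, followed by the reindex $k\mapsto n-k$, yields the corollary on the nose.
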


\begin{exam}\label{T1}
Let $G=SU(2)$ and consider $F/\Gamma^q$. 

$\bullet$ The case $q=2$ $(F_n/\Gamma^2_n=\Z^n)$ has been largely studied, and we follow \cite[pp.~482-484]{Ad4}. First a few preliminaries. Let $T\cong S^1$ be a maximal torus of $G$ and $W=N(T)/T=\{[w],e\}$ its Weyl group, where $w=\left(\begin{matrix}0&-1\\
1&0\end{matrix}\right).$ $W$ acts on $T$ via $[w]\cdot t=\overline{wtw^{-1}}=t^{-1}$ and using left translation on $G/T$ we get a diagonal action on $G/T\times T^n$. Let $\mathfrak{t}\cong i\R$ be the Lie algebra of $T$ with the induced action of $W$. There is an equivariant homeomorphism $\mathfrak{t}\to T-\{I\}$ so that $G/T\times_W(T-\{I\})^n\cong G/T\times_W\mathfrak{t}^n.$ The quotient map $G/T\to (G/T)/W\cong\RP^2$ is a principal $W$-bundle and we can take the associated vector bundle $p_n\colon G/T\times_W\mathfrak{t}^n\to\RP^2.$ Let $\lambda_2$ be the canonical vector bundle over $\RP^2$, then we can identify $p_n$ with $n\lambda_2$, the Whitney sum of $n$ copies of $\lambda_2$. The pieces in the homotopy stable decomposition before suspending are
\[\Hom(\Z^n, SU(2))/S_1(\Z^n,SU(2))\cong\left\{\begin{matrix}
S^3&\text{if }n=1\\
(\RP^2)^{n\lambda_2}/s_n(\RP^2)&\text{if }n\geq2\end{matrix}\right.\]
where $(\RP)^{n\lambda_2}$ is the associated Thom space of $n\lambda_2$ and $s_n$ is its zero section. Therefore
\[\Sigma\Hom(\Z^n,SU(2))\simeq\Sigma\bigvee_{n} S^3\bigvee_{2\leq k\leq n}\Sigma\left(\bigvee^{\binom{n}{k}}(\RP^2)^{k\lambda_2}/s_k(\RP^2)\right).\]

$\bullet$ Let $q=3$. An $n$-tuple $(g_1,...,g_n)$ lies in $\Hom(F_n/\Gamma^3_n,SU(2))$ if and only if $[[g_i,g_j],g_k]=I$ for all $1\leq i,j,k\leq n$, i.e., the commutators $[g_i,g_j]$ are central in the subgroup generated by $g_1,...,g_n$. We claim that the center of every non-abelian subgroup of $SU(2)$ sits inside $\{\pm I\}$. Suppose $a,b$ are two elements in $ SU(2)$ such that $[a,b]\ne I$. Then, the ciclic groups $\langle a\rangle$ and $\langle b\rangle$ are contained in different tori $T_1$ and $T_2$ respectively. Since the center of $\langle a,b\rangle$ is abelian it must lie in the intersection $T_1\cap T_2$. These two circles can only intersect at $\{\pm I\}$, which proves our claim. Therefore the central elements $[g_i,g_j]$ are in $ \{\pm I\}$ for all $1\leq i,j\leq n$. Consider
\[B_n(SU(2),\{\pm I\})=\{(g_1,...,g_n)\in SU(2)^n\;|\;[g_i,g_j]\in\{\pm I\}\}\]
the space of almost commuting tuples in $SU(2)$. By the previous observation \[\Hom(F_n/\Gamma^3_n,SU(2))=B_n(SU(2),\{\pm I\}).\]
In \cite[pp.~485-486]{Ad4}, they show that
\[B_n(SU(2),\{\pm I\})/S_1(SU(2),\{\pm I\})\cong \Hom(\Z^n,SU(2))/S_1(\Z^n,SU(2))\bigvee_{K(n)}PU(2)_+\]
where $K(1)=0$ and for $n\geq2$, $K(n)=\frac{7^n}{24}-\frac{3^n}{8}+\frac{1}{12}.$ Here $S_1(SU(2),\{\pm I\})$ are the $n$-tuples in $B_n(SU(2),\{\pm I\})$ with at least one coordinate equal to $I$. Since $PU(2)\cong\RP^3$ and $S_1(SU(2),\{\pm I\})=S_1(F_n/\Gamma^3_n,SU(2))$ we conclude that
\[\Sigma\Hom(F_n/\Gamma^3_n,SU(2))\simeq\Sigma\bigvee_{n} S^3\bigvee_{2\leq k\leq n}\Sigma\left(\bigvee^{\binom{n}{k}}(\RP^2)^{k\lambda_2}/s_k(\RP^2)\bigvee_{K(k)}\RP^3_+\right).\]
\end{exam}

\begin{remark}
For $q\geq 4$ we can find nilpotent subgroups of $SU(2)$ of class $q$. Indeed, if $\xi_n$ is a representative in $SU(2)$ of a primitive $n$-th root of unity, then the subgroup generated by the set $\{\xi_{2q},w\}$ with $w$ as above, is of nilpotency class $q$. With this we can show that the spaces $\Hom(F_n/\Gamma^q_n,SU(2))$ for $q\geq 4$ have more connected components than $\Hom(F_n/\Gamma^3_n, SU(2))$. More details will appear in \cite{Antolin}.
\end{remark}

\subsection{Equivariant Homotopy Stable Decomposition of $\Hom(L_n,G)$}

Let $G,H$ be topological groups and $f\colon G\to H$ a continuous homomorphism. If $L$ is a finitely generated cosimplicial group then for each $n$, we have the commutative diagrams
\[\xymatrix{\Hom(L_n,G)\ar[r]^{d_i}\ar[d]_{f_*}&\Hom(L_{n-1},G)\ar[d]^{f_*}\\
\Hom(L_n,H)\ar[r]^{d_i}&\Hom(L_{n-1},H),}\;\;\;\;\;\xymatrix{\Hom(L_n,G)\ar[r]^{s_i}\ar[d]_{f_*}&\Hom(L_{n+1},G)\ar[d]^{f_*}\\
\Hom(L_n,H)\ar[r]^{s_i}&\Hom(L_{n+1},H)}\]
for all $0\leq i,n$, so that $f_*$ is a simplicial map. Conjugation by elements of $G$ defines a homomorphism $G\to G$ so that $\Hom(L_n,G)$ is a $G$-space and each $S_t(L_n,G)$ is a $G$-subspace. 

\begin{defi}
Let $M$ be a $G$-space. We say that $M$ has a $G$-CW-structure if there exists a pair $(X,\xi)$ such that $X$ is a $G$-CW-complex and $\xi\colon X\to M$ is a $G$-equivariant homeomorphism. 
\end{defi}

We want to show that for all $n$, $\Hom(L_n,G)$ has a $G$-CW complex structure for which $S_n(L_n,G)\subset S_{n-1}(L_n,G)\subset\cdots\subset \Hom(L_n,G)$ are $G$-subcomplexes. To show this we slightly generalize some results in \cite{Park}.

We continue using the techniques of the previous section, so we require $G$ to be a real linear algebraic group. It is known that any compact Lie group has a unique algebraic group structure (see \cite[p.~247]{Onischnick}). 
Assuming $G$ is a compact Lie group, every representation space of $G$ has finite orbit types  (see \cite{Palais}), so when $M$ is an algebraic $G$-variety, the equivariant algebraic embedding theorem \cite[~Proposition 3.2]{Park} implies that $M$ has finite orbit types. Also, this theorem guarantees the existence of a $G$-invariant algebraic map $f\colon M\to \R^d$ for some $d$ such that the induced map $\overline{f}\colon M/G\to f(M)$ is a homeomorphism and $f(M)$ is a closed semi-algebraic set in $\R^d$ (\cite[~Lemma 3.4]{Park}). If $\tau\colon |K|\to M/G$ is a triangulation, we say that $\tau$ is compatible with a family of subsets $\{D_i\}$ of $M$, if $\pi(D_i)$ is a union of some $\tau(\inte|\sigma|)$, where $\sigma\in K$ and $\pi\colon M\to M/G$ is the quotient map.

\begin{prop}\label{prop2}
Let $G$ be a compact Lie group, $M_0$ an algebraic $G$-variety and $\{M_j\}_{j=1}^n$ a finite system of $G$-subvarieties of $M_0$. Then there exists a semi-algebraic triangulation $\tau\colon |K|\to M/G$ compatible with the collection $\{{M_j}_{(H)}\;|\;H \text{ is a subgroup of }G\}_{j=0}^n$ where ${M_j}_{(H)}=\{x\in M_j\;|\;G_x=gHg^{-1}\text{ for some }g\in G\}$.
\end{prop}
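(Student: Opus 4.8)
The strategy is to reduce the equivariant triangulation of $M/G$ to the non-equivariant triangulation of a closed semi-algebraic set in Euclidean space, and then to arrange that the orbit-type strata of each $M_j$ become subcomplexes. First I would invoke the equivariant algebraic embedding theorem (\cite[Proposition 3.2]{Park}) to see that $M_0$, being an algebraic $G$-variety with $G$ a compact Lie group, has only finitely many orbit types; hence the collection $\{{M_j}_{(H)}\}$ is actually a \emph{finite} family of subsets, ranging only over the finitely many conjugacy classes $(H)$ of isotropy subgroups that occur. Next, by \cite[Lemma 3.4]{Park}, there is a $G$-invariant algebraic map $f\colon M_0\to \R^d$ inducing a homeomorphism $\overline{f}\colon M_0/G\xrightarrow{\ \cong\ } f(M_0)$ onto a closed semi-algebraic subset of $\R^d$. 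Under this identification, the proposition becomes a statement about triangulating the closed semi-algebraic set $f(M_0)\subset\R^d$ compatibly with a finite family of subsets.

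The heart of the matter is then to check that each image $\pi({M_j}_{(H)})$ (equivalently $\overline{f}(\pi({M_j}_{(H)}))\subset\R^d$) is a semi-algebraic set. The $G$-subvariety $M_j$ is itself algebraic, hence semi-algebraic; its orbit-type piece ${M_j}_{(H)}$ is cut out inside $M_j$ by the condition that the isotropy group be conjugate to $H$. Since $G$ is a real linear algebraic group acting algebraically, the set $\{(x,g)\in M_j\times G\mid g\cdot x=x\}$ is an algebraic subvariety, and the fixed-point set of any subgroup, as well as the locus where the isotropy has a prescribed conjugacy type, can be described by first-order formulas over the reals; by the Tarski--Seidenberg theorem (the projection fact quoted from \cite[p.~167]{hironaka1}) these loci and their images under the semi-algebraic map $f$ are semi-algebraic. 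Thus we obtain a finite system of semi-algebraic subsets of the closed semi-algebraic set $f(M_0)\subset\R^d$.

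Finally I would apply the triangulation theorem for semi-algebraic sets in the form of Proposition \ref{Trian1} together with Remark \ref{Trian2} (i.e.\ \cite[Theorem 2.12]{Park}), which permits the unboundedness of the sets provided the ambient union $f(M_0)$ is closed in $\R^d$ --- which it is. This produces a simplicial complex $K$ and a semi-algebraic homeomorphism $k\colon|K|\to f(M_0)$ such that each semi-algebraic set $\overline{f}(\pi({M_j}_{(H)}))$ is a union of images $k(\inte|\sigma|)$. Composing with $\overline{f}^{-1}$ gives the desired triangulation $\tau = \overline{f}^{-1}\circ k\colon |K|\to M_0/G$, compatible with $\{{M_j}_{(H)}\}$ in the required sense. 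The main obstacle I anticipate is the semi-algebraicity of the orbit-type strata ${M_j}_{(H)}$: one must verify carefully that "isotropy group conjugate to $H$" is expressible as a semi-algebraic condition on $M_j$ --- this uses that $G$ is an algebraic group, that the $G$-action is algebraic (which is where the unique algebraic structure on the compact Lie group $G$ from \cite[p.~247]{Onischnick} enters), and that quantification over the compact algebraic group $G$ stays within the semi-algebraic category via Tarski--Seidenberg; everything else is a bookkeeping reduction to the already-cited results of \cite{Park} and \cite{hironaka1}.
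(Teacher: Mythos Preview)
Your proposal is correct and follows essentially the same route as the paper: finite orbit types via \cite[Proposition~3.2]{Park}, the $G$-invariant algebraic quotient map $f$ of \cite[Lemma~3.4]{Park}, semi-algebraicity of the orbit-type strata, and then the triangulation theorem (Remark~\ref{Trian2}) applied to the finite family $\{f({M_j}_{(H_i)})\}$, composed with $\overline{f}^{-1}$. The only difference is that where you sketch the Tarski--Seidenberg argument for the semi-algebraicity of ${M_j}_{(H)}$, the paper simply cites \cite[Lemma~3.3]{Park}, which packages exactly that result.
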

\begin{proof}
Let $H_1,...,H_s\subset G$ be the orbit types of $G$ on $M$ and $f\colon M\to\R^d$ as above. By \cite[~Lemma 3.3]{Park} all ${M_j}_{(H_i)}$ are semi-algebraic sets, and therefore all $f({M_j}_{(H_i)})$ are also semi-algebraic. Since $i,j$ vary on finite sets, we can use Proposition \ref{Trian2} and obtain a semi-algebraic triangulation \[\lambda\colon|K|\to f(M)=\bigcup_{ij}f({M_j}_{(H_i)})\] such that each $f({M_j}_{(H_i)})$ is a finite union of some $\lambda(\inte|\sigma|)$, where $\sigma\in K$. Take $\tau=\overline{f}^{-1}\circ\lambda$.
\end{proof}

\begin{prop}\label{prop5}
Let $G$ be a compact Lie group. Let $M_0$ be an algebraic $G$-variety and $\{M_j\}_{j=1}^k$ a finite system of $G$-subvarieties. Then $M_0$ has a $G$-CW-complex structure such that each $M_j$ is a $G$-subcomplex of $M$.
\end{prop}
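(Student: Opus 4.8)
The plan is to combine Proposition~\ref{prop2} with the standard inductive construction of a $G$-CW structure from a triangulation of the orbit space, building the cells skeleton-by-skeleton over the simplices of the compatible triangulation $\tau\colon|K|\to M_0/G$. First I would invoke Proposition~\ref{prop2} applied to the system $\{M_j\}_{j=1}^k$ together with $M_0$ itself, obtaining a semi-algebraic triangulation $\tau\colon|K|\to M_0/G$ that is simultaneously compatible with the strata ${M_j}_{(H)}$ for every orbit type $H$ and every $j$. The point of the orbit-type refinement is that the full preimage $\pi^{-1}(\tau(\inte|\sigma|))$ over the interior of a simplex $\sigma$ lies entirely in a single orbit-type stratum $M_{0\,(H_\sigma)}$, hence is a locally trivial fiber bundle over $\tau(\inte|\sigma|)$ with fiber the homogeneous space $G/H_\sigma$; since $\inte|\sigma|$ is contractible, this bundle is trivial, so $\pi^{-1}(\tau(|\sigma|))$ is (up to the identifications on the boundary) a product $G/H_\sigma\times |\sigma|$. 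This is exactly the local model of an equivariant cell of orbit type $(H_\sigma)$.

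Next I would assemble these equivariant cells into a $G$-CW structure by induction on the dimension of the simplices of $K$. Order the simplices by dimension; having built the equivariant $(p-1)$-skeleton $X^{(p-1)}$ together with a $G$-homeomorphism onto $\pi^{-1}(\tau(|K^{(p-1)}|))$, for each $p$-simplex $\sigma$ choose a trivialization of the bundle $\pi^{-1}(\tau(\inte|\sigma|))\to\inte|\sigma|$ and extend it to the closed simplex to obtain an equivariant attaching map $G/H_\sigma\times\partial|\sigma|\to X^{(p-1)}$; the pushout of $X^{(p-1)}\leftarrow G/H_\sigma\times\partial|\sigma|\to G/H_\sigma\times|\sigma|$ defines $X^{(p)}$, and the various trivializations glue (after the standard adjustment on overlaps, using that bundles over a simplex are trivial and the relevant gluing data can be rigidified) to a $G$-homeomorphism $\xi\colon X\to M_0$. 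Because $\tau$ is compatible not only with the ${M_0}_{(H)}$ but with each ${M_j}_{(H)}$, every $M_j$ is a union of full preimages $\pi^{-1}(\tau(\inte|\sigma|))$, hence is a union of the equivariant cells constructed above, i.e.\ a $G$-subcomplex of $X$. Finiteness of the orbit types (guaranteed for a compact Lie group acting algebraically via \cite[Proposition 3.2]{Park}) ensures only finitely many orbit types occur, so the construction is honest.

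The main obstacle is the gluing step: turning the collection of fiberwise trivializations over individual closed simplices into a single globally consistent $G$-CW structure, i.e.\ verifying that the attaching maps can be chosen compatibly on faces so that the $G$-homeomorphisms patch. The standard way around this is to do the induction carefully: when passing from the $(p-1)$-skeleton to the $p$-skeleton, the trivialization of $\pi^{-1}(\tau(\inte|\sigma|))$ is extended over $|\sigma|$ using the already-fixed trivializations on the faces of $\sigma$ (possible because $|\sigma|$ is contractible, so any bundle over it restricting to a given trivial bundle on $\partial|\sigma|$ admits a compatible global trivialization), and the attaching map is then read off from this extension. Semi-algebraicity of $\tau$ and of $f$ is what lets us know these bundles are genuinely locally trivial in a controlled way, but the homotopy-theoretic content of the patching is the same as in the classical proof that a manifold (or simplicial complex) with smooth $G$-action admits a $G$-CW structure (Illman's theorem); I would cite that circle of ideas for the routine parts and spell out only the orbit-type/compatibility bookkeeping that makes the $M_j$ come out as subcomplexes.
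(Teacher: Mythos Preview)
Your approach is the paper's: invoke the compatible triangulation of $M_0/G$ from Proposition~\ref{prop2}, use constancy of orbit type over each open simplex to get local product structures $G/H_\sigma\times\inte|\sigma|$, and assemble these into equivariant cells with the $M_j$ appearing as subcomplexes by compatibility. The paper adds one preparatory step you omit---it passes to the barycentric subdivision $K'$ of $K$ before building cells---but the architecture is the same.

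The genuine gap is in your resolution of the gluing step. You argue that the trivialization over $\inte|\sigma|$ extends to $|\sigma|$ because ``$|\sigma|$ is contractible, so any bundle over it restricting to a given trivial bundle on $\partial|\sigma|$ admits a compatible global trivialization.'' But $\pi\colon\pi^{-1}(\tau(|\sigma|))\to\tau(|\sigma|)$ is \emph{not} a fibre bundle over the closed simplex: the isotropy jumps on the boundary (the fibre over a boundary point is $G/H'$ with $H_\sigma\subsetneq H'$ in general), so contractibility of $|\sigma|$ does not by itself furnish the extension. What is actually required is a continuous section $s\colon\tau(|\sigma|)\to M_0$ whose image over the open simplex has isotropy exactly $H_\sigma$; then $(gH_\sigma,x)\mapsto g\cdot s(x)$ is the characteristic map $G/H_\sigma\times|\sigma|\to M_0$ of an equivariant cell, and on the boundary it factors through lower skeleta since $H_\sigma\subseteq G_{s(x)}$ there. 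The paper obtains exactly this section by citing \cite[Lemma~4.4]{matumoto}. Your inductive extension idea can be made rigorous, but it needs the slice theorem or an equivalent local normal form near boundary strata rather than a bundle-triviality argument; invoking Illman is the right circle of ideas, but the specific input the paper uses is the Matumoto--Shiota section lemma.
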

\begin{proof}
Let $\tau\colon |K|\to M/G$ be as in Proposition \ref{prop2} and $\pi\colon M\to M/G$ the orbit map. Let $K^\prime$ be a barycentric subdivision of $K$, which guarantees that for any simplex $\Delta^n$ of $K^\prime$, $\pi^{-1}(\tau(\Delta^n-\Delta^{n-1}))\subset {M_j}_{(H_n)}$ for some $H_n\subset G$ and $0\leq j\leq k$. Since $\tau|\colon\pi^{-1}(\tau(\Delta^n))/G\to\Delta^n$ is a homeomorphism and the orbit type of $\pi^{-1}(\tau(\Delta^n-\Delta^{n-1}))$ is constant, by \cite[~Lemma 4.4]{matumoto} there exists a continuous section $s\colon \tau(\Delta^n)\to M_j$ so that $s\circ\tau(\Delta^n-\Delta^{n-1})$ has a constant isotropy subgroup $H_n$. Consequently there is an equivariant homeomorphism \[\pi^{-1}\tau(\Delta^n-\Delta^{n-1})\cong G/H_n\times(\Delta^n-\Delta^{n-1}).\] 
Collecting $G$-cells $Gs\circ\tau(\Delta^n)$ for all simplexes of $K^\prime$ we get a $G$-CW structure over all $M_j$, $0\leq j\leq k$.
\end{proof}

For a finitely generated cosimplicial group $L$, denote $\Rep(L_n,G):=\Hom(L_n,G)/G$ and $\overline{S_t}(L_n,G):=S_t(L_n,G)/G$.

\begin{theorem}
Let $G$ be a compact Lie group and $L$ a finitely generated cosimplicial group. Then for each $n$, $\Theta(n)$ from \emph{Theorem \ref{prop16}} is a $G$-equivariant homotopy equivalence, and in particular we get homotopy equivalences
\[\Sigma \Rep(L_n,G)\simeq \bigvee_{1\leq k\leq n}\Sigma(\overline{S}_k(L_n,G)/\overline{S}_{k+1}(L_n,G)).\]
\end{theorem}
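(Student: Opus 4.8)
The plan is to upgrade the proof of Theorem~\ref{prop16} to the category of $G$-spaces and then pass to orbits. The first step is to observe that $\Hom(L_n,G)$ is an \emph{algebraic $G$-variety} for the conjugation action. Since $G$ is a compact Lie group it carries a unique real linear algebraic group structure (\cite[p.~247]{Onischnick}), and the conjugation map $G\times\Hom(L_n,G)\to\Hom(L_n,G)$, $(g,(x_1,\dots,x_r))\mapsto(gx_1g^{-1},\dots,gx_rg^{-1})$, is a polynomial map preserving the subvariety $\Hom(L_n,G)\subset G^r$, because conjugating a homomorphism yields a homomorphism. Moreover every $S_t(L_n,G)=S^t(\Hom(L_n,G))$ is a $G$-subvariety: it is a subvariety by the proof of Theorem~\ref{prop16}, and it is $G$-invariant because each degeneracy map $s_i=(s^i)^*$ is $G$-equivariant (precomposition with the group homomorphism $s^i$ commutes with conjugation). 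Hence Proposition~\ref{prop5}, applied with $M_0=\Hom(L_n,G)$ and the finite system $\{S_t(L_n,G)\}_{t=1}^{n}$ of $G$-subvarieties, gives $\Hom(L_n,G)$ a $G$-CW-complex structure in which every term of the filtration
\[S_n(L_n,G)\subset S_{n-1}(L_n,G)\subset\cdots\subset S_0(L_n,G)=\Hom(L_n,G)\]
is a $G$-subcomplex.

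Next I would deduce the equivariant NDR condition. Each inclusion $S_t(L_n,G)\hookrightarrow S_{t-1}(L_n,G)$ is then a $G$-cofibration, so $(S_{t-1}(L_n,G),S_t(L_n,G))$ is a $G$-equivariant NDR pair — the functions $h$, $u$ in the definition of an NDR pair can be taken $G$-equivariant, with $u$ $G$-invariant. In other words $\Hom(L,G)$, regarded as a simplicial object in $G$-spaces, is equivariantly simplicially NDR. The construction of $\Theta(n)$ underlying Proposition~\ref{prop15} (from \cite{Gitler}) is assembled entirely from the simplicial structure maps $d_i,s_i$ of $X=\Hom(L,G)$, all of which are $G$-equivariant as noted, together with the NDR data of the pairs $(S^{t-1}(X_n),S^t(X_n))$, which we have just arranged to be $G$-equivariant. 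Performing that construction internally to $G$-spaces therefore produces a $G$-homotopy equivalence
\[\Theta(n)\colon\Sigma\Hom(L_n,G)\;\simeq_G\;\bigvee_{0\leq k\leq n}\Sigma\big(S_k(L_n,G)/S_{k+1}(L_n,G)\big),\]
where $G$ acts trivially on the suspension coordinate and fixes the wedge point.

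Finally I would apply the orbit-space functor $(-)/G$. Since $G$ acts trivially on the suspension coordinate, $(\Sigma M)/G=\Sigma(M/G)$; since all wedge points are $G$-fixed, $\big(\bigvee_i M_i\big)/G=\bigvee_i(M_i/G)$; since $S_{k+1}(L_n,G)$ is a $G$-invariant subspace, $\big(S_k(L_n,G)/S_{k+1}(L_n,G)\big)/G=\overline{S}_k(L_n,G)/\overline{S}_{k+1}(L_n,G)$; and a $G$-homotopy equivalence induces a homotopy equivalence of orbit spaces. Applying $(-)/G$ to $\Theta(n)$ therefore yields $\Sigma\Rep(L_n,G)\simeq\bigvee_{1\leq k\leq n}\Sigma(\overline{S}_k(L_n,G)/\overline{S}_{k+1}(L_n,G))$.

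The main obstacle is the middle step: one must be certain that the stable splitting of Proposition~\ref{prop15} is genuinely natural with respect to NDR pairs — i.e. that every auxiliary map in \cite{Gitler} (the iterated collapses, the deformation retractions rectifying the cofiber sequences, the suspension coordinates) can be chosen compatibly with the $G$-action once the NDR data and structure maps are equivariant. Granting that the argument of \cite{Gitler} is formal in this sense, the conclusion is immediate from Proposition~\ref{prop5}; the remaining points (equivariance of the $d_i,s_i$, $G$-invariance of the $S_t$, and the behaviour of $(-)/G$ on suspensions and wedges) are routine.
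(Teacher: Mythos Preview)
Your setup (the $G$-variety structure, invariance of the $S_t$, and the application of Proposition~\ref{prop5}) coincides with the paper's.  Where you diverge is in the middle step.  You try to run the construction of \cite{Gitler} \emph{internally} to $G$-spaces, using equivariant NDR data coming from the $G$-CW structure, so that $\Theta(n)$ is born as a $G$-homotopy equivalence.  The paper instead uses only the \emph{naturality} of $\Theta(n)$ in the simplicial space variable to see that the map is $G$-equivariant, and then checks it is a $G$-equivalence by passing to fixed points: for each closed $H\le G$ the spaces $\Hom(L_n,G)^H$ and $S_t(L_n,G)^H$ are subcomplexes of the $G$-CW complexes just produced, hence the simplicial space $\Hom(L,G)^H$ is simplicially NDR, Proposition~\ref{prop15} gives a homotopy equivalence $\Theta(n,H)$, and naturality identifies $\Theta(n)^H=\Theta(n,H)$.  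The equivariant Whitehead theorem then finishes.

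Both routes are valid.  The advantage of the paper's argument is precisely the point you flag as your ``main obstacle'': it never opens the black box of \cite{Gitler}.  It needs only that $\Theta(n)$ is natural in morphisms of simplicial spaces (already stated after Proposition~\ref{prop15}) and that fixed points of $G$-CW pairs are CW pairs.  Your argument, by contrast, asks you to verify that every auxiliary homotopy in the Gitler splitting can be chosen equivariantly once the NDR data are; this is believable but is extra work you have not carried out.  If you want to keep your line of argument, the cleanest fix is exactly the paper's: replace ``perform the construction equivariantly'' with ``the map is equivariant by naturality; on $H$-fixed points it is the splitting of Proposition~\ref{prop15} for the (ordinary) simplicially NDR space $\Hom(L,G)^H$; now invoke equivariant Whitehead.''  The passage to orbit spaces is then exactly as you wrote.
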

\begin{proof}
Assume $G\subset GL_N(\R)$. Under conjugation by elements of $G$, $\Hom(L_n,G)$ is an affine $G$-variety and by Lemma \ref{Lemma0} the subspaces $S_j(L_n,G)$ are $G$-subvarieties for all $1\leq j\leq n$. Hence, by Proposition \ref{prop5} $\Hom(L_n,G)$ can be given a $G$-CW-complex structure where each $S_j(L_n,G)$ is a $G$-subcomplex. Similarly, the quotient $S_k(L_n,G)/S_{k+1}(L_n,G)$ has a $G$-CW-complex structure. 

To prove that the map $\Theta(n)$ is a $G$-equivariant homotopy equivalence, first recall that conjugation by elements of $G$ defines a simplicial action on $\Hom(L,G)$, and by the naturality of each $\Theta(n)$, the $G$-equivariance follows. Let $H\subset G$ be closed subgroup. The fixed points spaces $\Hom(L_n,G)^H$ and $S_k(L_n,G)^H$ inherit a CW-complex structure so that $\Hom(L,G)^H$ is simplicially NDR. By Proposition \ref{prop15} we have homotopy equivalences
 \[\Theta(n,H)\colon\Sigma (\Hom(L_n,G)^H)\to\bigvee_{0\leq k\leq n}\Sigma (S_k(L_n,G)^H/S_{k+1}(L_n,G)^H)\]
for each $n\geq 1$. The fixed points map $\Theta(n)^H$ agrees by naturality with $\Theta(n,H)$ and thus is a homotopy equivalence. The result now follows from the equivariant Whitehead Theorem.
\end{proof}

\begin{corl}
Let $G$ be a compact Lie group. Then the homotopy equivalences in \emph{Corollary \ref{Corl1}} are $G$-equivariant homotopy equivalences, and in particular we get 
\[\Sigma \Rep(F_n/\Gamma^q_n,G)\simeq \bigvee_{1\leq k\leq n}\Sigma\left(\bigvee^{\binom{n}{k}}\Rep(F_k/\Gamma^q_k,G)/\overline{S_1}(F_k/\Gamma^q_k,G)\right).\]
\end{corl}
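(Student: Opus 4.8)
The plan is to obtain this corollary as a direct consequence of the equivariant stable decomposition theorem (the theorem immediately preceding it in the text) applied to the specific cosimplicial group $F/\Gamma^q$, combined with the homeomorphism identified in Lemma \ref{Lemma1}. First I would invoke the previous theorem with $L=F/\Gamma^q$: since $G$ is a compact Lie group, it carries its unique real algebraic group structure, so $\Hom(F_n/\Gamma^q_n,G)$ is an affine $G$-variety with each $S_k(F_n/\Gamma^q_n,G)$ a $G$-subvariety, and the map $\Theta(n)$ from Theorem \ref{prop16} is a $G$-equivariant homotopy equivalence
\[\Sigma\Hom(F_n/\Gamma^q_n,G)\simeq\bigvee_{0\leq k\leq n}\Sigma\bigl(S_k(F_n/\Gamma^q_n,G)/S_{k+1}(F_n/\Gamma^q_n,G)\bigr).\]

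Next I would upgrade the homeomorphism of Lemma \ref{Lemma1} to an equivariant one. The point is that the projections $P_{i_1,\dots,i_m}\colon G^n\to G^{n-k}$ used there are induced by the coordinate-inclusion homomorphisms $F_{n-k}\to F_n$, hence they are $G$-equivariant with respect to the diagonal conjugation actions; likewise the degeneracy sections $s_{j_1}\circ\cdots\circ s_{j_k}$ are equivariant because they come from codegeneracy homomorphisms of cosimplicial groups, and all the maps $f_*$ associated to homomorphisms $G\to H$ are equivariant (as noted in the text). Therefore the homeomorphism
\[S_k(F_n/\Gamma^q_n,G)/S_{k+1}(F_n/\Gamma^q_n,G)\cong\bigvee^{\binom{n}{k}}\Hom(F_k/\Gamma^q_k,G)/S_1(F_k/\Gamma^q_k,G)\]
is $G$-equivariant, so passing to orbit spaces gives a homeomorphism $\overline{S}_k(F_n/\Gamma^q_n,G)/\overline{S}_{k+1}(F_n/\Gamma^q_n,G)\cong\bigvee^{\binom{n}{k}}\Rep(F_k/\Gamma^q_k,G)/\overline{S_1}(F_k/\Gamma^q_k,G)$ (here one also uses that the wedge point, being the image of $S_1$, behaves well under quotienting, and that for a compact Lie group acting on a $G$-CW-complex the orbit space of a wedge is the wedge of orbit spaces).

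Finally I would substitute the equivariant identification into the equivariant decomposition. Because the $G$-action on $\Hom(F_n/\Gamma^q_n,G)$ fixes the base tuple $(I,\dots,I)$, the subspace $S_1$ collapses $G$-equivariantly and the $k=0$ summand drops out after suspension just as in Corollary \ref{Corl1}; this yields the $G$-equivariant homotopy equivalence
\[\Sigma\Hom(F_n/\Gamma^q_n,G)\simeq\bigvee_{1\leq k\leq n}\Sigma\Bigl(\bigvee^{\binom{n}{k}}\Hom(F_k/\Gamma^q_k,G)/S_1(F_k/\Gamma^q_k,G)\Bigr).\]
Taking $H$-fixed points for every closed $H\subset G$ and applying the equivariant Whitehead theorem (exactly as in the proof of the preceding theorem) shows this is a genuine $G$-equivalence, and passing to orbit spaces gives the displayed formula for $\Sigma\Rep(F_n/\Gamma^q_n,G)$. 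I do not expect a serious obstacle here: the only thing requiring care is verifying that every map in the proof of Lemma \ref{Lemma1} is conjugation-equivariant and that orbit-space formation commutes with the wedge and quotient constructions for compact Lie group actions on $G$-CW-complexes — both are routine once the $G$-CW structures from Proposition \ref{prop5} are in hand.
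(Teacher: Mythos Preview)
Your proposal is correct and follows the same approach the paper intends: the paper states this corollary without proof, treating it as immediate from the preceding equivariant theorem applied to $L=F/\Gamma^q$ together with the identification of Lemma~\ref{Lemma1}. Your only extra work is making explicit that the homeomorphism in Lemma~\ref{Lemma1} is $G$-equivariant (which it is, for the reasons you give), and this is exactly the detail the paper is silently assuming; the final invocation of fixed points and equivariant Whitehead is unnecessary, since the preceding theorem already delivers $\Theta(n)$ as a $G$-equivariant equivalence and composing with a $G$-homeomorphism preserves that.
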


\begin{exam}
Let $G=SU(2)$ and $L=F/\Gamma^q$.

$\bullet$ For $q=2$, it was proven in \cite[p.~484]{Ad4} that \[\Rep(\Z^n,SU(2))/\overline{S_1}(\Z^n,SU(2))\simeq T^{\wedge n}/W=S^n/\Sigma_2\] where the action of the generating element on $\Sigma_2$ is given by \[(x_0,x_1,...,x_n)\mapsto (x_0,-x_1,...,-x_n)\]
for any $(x_0,x_1,...,x_n)$. Identifying $S^n=\Sigma S^{n-1}$, we can see the orbit space $S^n/\Sigma_2$ as first taking antipodes, and then suspending, that is $S^n/\Sigma_2\cong\Sigma\RP^{n-1}$. Thus
\[\Sigma\Rep(\Z^n,SU(2))\simeq \bigvee_{1\leq k\leq n}\Sigma\left(\bigvee^{\binom{n}{k}}\Sigma\RP^{k-1}\right).\]

$\bullet$ Let $q=3$. We have shown that $\Rep(F_n/\Gamma^3_n,SU(2))=B_n(SU(2),\{\pm I\})/G$ and using the description of these spaces given in \cite[p.~486]{Ad4}, the stable pieces are 
\[\Rep(F_n/\Gamma_n^3,SU(2))/\overline{S_1}(F_n/\Gamma^3_n,SU(2))\simeq \left(\bigvee_{K(n)} S^0\right)\vee \Sigma\RP^{n-1}.\]
where $K(n)$ is as in Example \ref{T1}. Therefore
\[\Sigma\Rep(F_n/\Gamma^3_n,SU(2))\simeq \bigvee_{1\leq k\leq n}\Sigma\left(\bigvee^{\binom{n}{k}}\left(\bigvee_{K(k)} S^0\right)\vee \Sigma\RP^{k-1}\right).\] 
\end{exam}

\section{Homotopy properties of $B(L,G)$}

\subsection{Geometric realization of $\Hom(L,G)$}

\begin{defi}
Let $L$ be finitely generated cosimplicial group and $G$ a topological group. Denote 
\[B(L,G):=|\Hom(L,G)|.\] 
\end{defi}

For the cosimplicial groups $F/\Gamma^q$ we get that $B(F/\Gamma^q,G)=B(q,G)$, the classifying space for $G$-bundles of transitional nilpotency class less than $q$. A natural question is whether or not the space $B(L,G)$ is a classifying space for a specific class of $G$-bundles. 

\begin{lemma}\label{lemma3.2}
Let $L$ be a cosimplicial group. The 1-cocycles of $L$ are in one to one correspondence with cosimplicial morphisms $F\to L$.
\end{lemma}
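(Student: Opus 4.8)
The plan is to construct, for a given cosimplicial group $L$, two mutually inverse assignments between $Z^1(L)$ and the set $\Hom_{\Delta\text{-}\Grp}(F,L)$ of cosimplicial morphisms $F\to L$, and then check compatibility with the cosimplicial structure. In one direction, given a cosimplicial morphism $\phi\colon F\to L$, I would send it to the element $\phi^1(a_1)\in L_1$, where $\phi^n\colon F_n\to L_n$ denotes the $n$-th component; recall from the earlier example in the excerpt that $a_1\in Z^1(F)$ because $d^1(a_1)=a_1a_2=d^2(a_1)d^0(a_1)$. Since $\phi$ is a cosimplicial morphism it commutes with all coface and codegeneracy homomorphisms, so $\phi^1(a_1)$ satisfies $d^2(\phi^1(a_1))d^0(\phi^1(a_1))=\phi^2(d^2(a_1)d^0(a_1))=\phi^2(d^1(a_1))=d^1(\phi^1(a_1))$, i.e. $\phi^1(a_1)\in Z^1(L)$. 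This gives a well-defined map $\Hom_{\Delta\text{-}\Grp}(F,L)\to Z^1(L)$.

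In the other direction, given a $1$-cocycle $b\in Z^1(L)$, I would define a cosimplicial morphism $\phi_b\colon F\to L$ by specifying it on generators. Since $F_n$ is the free group on $a_1,\dots,a_n$, it suffices to assign to each generator $a_j$ an element of $L_n$; the natural choice, forced by cosimpliciality, is to write $a_j$ as an iterated image of $a_1$ under coface maps of $F$ and apply the corresponding coface maps of $L$ to $b$. Concretely one checks that in $F$ every generator $a_j$ lies in the image of $a_1$ under a composite of $d^i$'s, so there is at most one possible cosimplicial extension, and one defines $\phi_b^n(a_j)$ accordingly (for instance using the elements $b_n=d^n\cdots d^1(b)$ and their translates under $d^0$, as in the discussion of equations (\ref{eq2}) and (\ref{eq3}) preceding the construction of $L^b$). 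The key verification is that this assignment respects the relations — but $F_n$ is free, so there are no relations to check — and that the resulting family $\{\phi_b^n\}$ commutes with all coface and codegeneracy homomorphisms. Commutation with codegeneracies uses $b\in\ker s^0$ and the cosimplicial identities; commutation with cofaces uses the cocycle identity (\ref{eq1}) together with the explicit formulas for $d^i$ on $F$ from the definition of $F$.

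Finally I would check that the two assignments are inverse to each other: starting from $\phi$, extracting $b=\phi^1(a_1)$, and re-extending gives back $\phi$ because both are cosimplicial morphisms agreeing on the generator $a_1$ of $F_1$, and (by cosimpliciality and the fact that the $a_j$'s are reached from $a_1$ via cofaces) a cosimplicial morphism out of $F$ is determined by its value on $a_1\in F_1$; conversely, starting from $b$ and forming $\phi_b$, we get $\phi_b^1(a_1)=b$ by construction. The main obstacle I anticipate is the bookkeeping in the second direction: making precise the claim that $F$ is ``cofreely generated'' by $a_1\in F_1$ in the cosimplicial sense — i.e. that every generator $a_j\in F_n$ is canonically an iterated coface image of $a_1$ and that the induced formulas for $\phi_b^n$ are consistent under all cosimplicial identities — and then grinding through the verification that the coface identities of $F$ are matched by the cocycle condition on $b$. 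Everything else is formal.
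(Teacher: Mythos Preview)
Your proposal is correct and follows essentially the same approach as the paper: both arguments exploit that a cosimplicial morphism out of $F$ is determined by the image of $a_1\in F_1$, construct the inverse by pushing $b$ forward along iterated cofaces, and verify cosimpliciality using the cocycle condition together with the cosimplicial identities. The paper is slightly more explicit than you are in one place---it writes down the closed formula $h^n(a_j)=(d^0)^{j-1}(d^2)^{n-j}(b)$ and does the case analysis for the coface squares directly---whereas you spell out the bijection more carefully; but the substance is the same. (One small slip: your parenthetical formula $b_n=d^n\cdots d^1(b)$ should read $d^n\cdots d^2(b)$, matching the paper's $b_{n+1}=d^{n+1}(b_n)$ with $b_1=b$.)
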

\begin{proof}
Suppose $b$ is a 1-cocycle. Any generator $a_j\in F_n$ is in the image of $a_1\in F_1$ under composition of coface homomorphisms, e.g., $a_j=(d^0)^{j-1}(d^2)^{n-j}(a_1)$ for all $j\geq 1$. Define $h^n\colon F_n\to L_n$ as $h^n(a_j)=(d^0)^{j-1}(d^2)^{n-j}(b)$. To show that $h$ is cosimplicial, consider the diagrams
\[\xymatrix{F_{n-1}\ar[r]^{h^{n-1}}\ar[d]_{d^i}&L_{n-1}\ar[d]^{d^i}\\F_n\ar[r]^{h^n}&L_n}\;\;\;\;\;\xymatrix{F_{n+1}\ar[r]^{h^{n+1}}\ar[d]_{s^i}&L_{n+1}\ar[d]^{s^i}\\F_{n}\ar[r]^{h^n}&L_n}.\]
We prove the case of coface homomorphisms. Let $a_j\in F_{n-1}$. On one side we get
\[h_nd^i(a_j)=\left\{\begin{matrix}
(d^0)^{j-1}(d^2)^{n-j}(b)&j<i\\
(d^0)^{j-1}(d^2)^{n-j}(b)(d^0)^{j}(d^2)^{n-j-1}(b)&j=i\\
(d^0)^{j}(d^2)^{n-j-1}(b)&j>i
\end{matrix}\right.\]
and applying the cosimplicial identity $d^kd^l=d^ld^{k-1}$ where $k>l$ we obtain
\[d^ih^{n-1}(a_j)=\left\{\begin{matrix}
(d^0)^{j-1}(d^{i-j+1})(d^2)^{n-j-1}(b)&j<i\\
(d^0)^{j-1}(d^1)(d^2)^{n-j-1}(b)&j=i\\
(d^0)^{j}(d^2)^{n-j-1}(b)&j>i
\end{matrix}\right..\]
We need to analyze 2 cases: 

$\bullet$ $j<i$ implies that $i-j+1\geq2$, thus $d^{i-j+1}(d^2)^{n-j-1}=(d^2)^{n-j}$.

$\bullet$ $j=i$. The equality follows from equation \ref{eq2} applied to $(d^2)^{n-j-1}(b)=b_{n-j}$.

Commutativity for the codegeneracy homomorphisms is similar, but using the cosimplicial identity $s^kd^l=d^ls^{k-1}$ with $k>l$ and condition \ref{eq3} above. Hence $h$ is uniquely determined by $b$. Given a morphism $F\to L$, the element $b$ is given by the image of $a_1\in F_1$. 
\end{proof}

\begin{prop}\label{prop3.4}
Let $L$ be a cosimplicial group and $h_b\colon F\to L$ be the morphism defined on $F_1\to L_1$ as $a_1\mapsto b$. Then the diagram
\[\xymatrix{F\ar@{^(->}[r]^{\iota_+}\ar[d]_{h_b}&F^+\ar[d]^{Id*h_b}\\L\ar@{^(->}[r]^{\iota_b}&L^b}\]
is a pushout of cosimplicial groups.
\end{prop}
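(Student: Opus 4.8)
The plan is to verify the universal property of the pushout directly, working at each cosimplicial level $n$ and then checking compatibility with the cosimplicial structure. Since colimits of cosimplicial groups are computed objectwise, it suffices to show that for each $n$ the square
\[\xymatrix{F_n\ar@{^(->}[r]^{\iota_+}\ar[d]_{h_b^n}&F_n^+\ar[d]^{Id*h_b^n}\\L_n\ar@{^(->}[r]^{\iota_b}&L_n^b}\]
is a pushout in $\Grp$, and that these objectwise pushouts assemble into the cosimplicial group $L^b$ as constructed. First I would recall the construction of $L^b$: $L_n^b=\overline{F}_0*L_n$, the inclusion $\iota_b$ is the canonical one, and $\iota_+\colon F_n\hookrightarrow F_n^+=\overline{F}_0*F_n$ is likewise canonical. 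Here $h_b^n\colon F_n\to L_n$ is the homomorphism of Lemma \ref{lemma3.2} sending $a_j$ to $(d^0)^{j-1}(d^2)^{n-j}(b)$, and $Id*h_b^n\colon \overline{F}_0*F_n\to \overline{F}_0*L_n$ is the coproduct of the identity on $\overline{F}_0$ with $h_b^n$.

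The key algebraic step is the following: for groups, a pushout $A\leftarrow C\rightarrow B$ with $C\hookrightarrow A$ split by a retraction is well-behaved, and in our case the square is of the form
\[\xymatrix{C\ar@{^(->}[r]\ar[d]_{g}&C*D\ar[d]^{g*Id}\\E\ar@{^(->}[r]&E*D}\]
with $C=F_n$, $D=\overline{F}_0$, $E=L_n$, $g=h_b^n$. This is a pushout because the coproduct functor $(-)*D$ is a left adjoint (to the functor sending a group $H$ to the pullback $H\times_{\mathrm{Aut}?}$... more simply: $(-)*D$ preserves all colimits, being a left adjoint, hence preserves pushouts, and $F_n$ is the pushout of $F_n\leftarrow F_n\xrightarrow{h_b^n} L_n$ — trivially — no, rather: the square with $C\to E$ and $C\to C$ identity is a pushout with corner $E$, and applying $(-)*D$ gives the square above with corner $E*D$). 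So the objectwise pushout property reduces to the fact that $-*\overline{F}_0$ preserves pushouts, which holds since free products are coproducts in $\Grp$ and coproduct with a fixed object is a left adjoint, hence cocontinuous.

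The remaining and more delicate step is to check that the objectwise pushouts $L_n^b=\overline{F}_0*L_n$, equipped with the coface and codegeneracy maps $d_b^i$ and $s_b^i$ defined in the construction of $L^b$, really do form the pushout \emph{in the category of cosimplicial groups} — i.e. that the cosimplicial structure on the pushout computed objectwise coincides with the one given in the construction. Here I expect the main obstacle: I must verify that the induced cosimplicial structure on $\{\overline{F}_0*L_n\}_n$ from the pushout — namely the unique maps making $\iota_b$ and $Id*h_b$ cosimplicial — agrees with $d_b^i=Id*d^i$ for $i>0$, $d_b^0=k_n*d^0$ (with $k_n(a_0)=a_0b_n$), and $s_b^i=Id*s^i$. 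For $i>0$ and the codegeneracies this is immediate from naturality, since those maps restrict to $Id$ on $\overline{F}_0$ and to the structure maps of $L$ on $L_n$, which is forced by the two inclusions. For $d_b^0$ one must check that $k_n*d^0$ is the unique homomorphism $\overline{F}_0*L_{n-1}\to\overline{F}_0*L_n$ through which both $\iota_b\circ d^0$ and $(Id*h_b^n)\circ d_+^0$ factor, where $d_+^0$ is the zeroth coface of $F^+$; this is exactly the content that makes Proposition \ref{prop3.4} nontrivial and it follows by chasing the definition $d_+^0(a_0)=a_0a_1$ in $F^+$ together with $h_b^n(a_1)=b_n$, giving $(Id*h_b^n)d_+^0(a_0)=a_0b_n=k_n(a_0)$, which matches. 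Once these identifications are in place, the universal property of the objectwise pushout upgrades to the universal property in cosimplicial groups because a cosimplicial morphism out of $L^b$ is precisely a compatible family of group homomorphisms out of the $L_n^b$, and the compatibility conditions match on both sides. I would close by remarking that the pushout so described is $L^b$ together with $\iota_b$ and $Id*h_b$, establishing the claim.
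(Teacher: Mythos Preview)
Your proposal is correct and arrives at the same key computation as the paper: both arguments reduce to checking that $(Id*h_b^n)\circ d_+^0(a_0)=a_0b_n$, which is exactly the verification the paper carries out when showing its mediating map $h$ commutes with $d^0$. The only difference is packaging --- the paper directly constructs $h\colon L^b\to K$ from given test maps $f,g$ and checks it is cosimplicial, whereas you invoke the general fact that colimits in functor categories are computed objectwise and then identify the induced structure maps; one small quibble is that $(-)*D$ is not literally a left adjoint as an endofunctor of $\Grp$, but it does preserve colimits (coproducts commute with colimits), so your conclusion is unaffected.
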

\begin{proof}
Suppose $f\colon F^+\to K$ and $g\colon L\to K$ are morphisms such that $f\circ \iota=g\circ h_b$. Define $h\colon L^b\to K$ on each $L_n^b=\overline{F}_0*L_n$ as $h^n(a_0)=f^n(a_0)$ (here $f^n$ is evaluated on $a_0\in \overline{F}_0*F_n$) and $h^n(x)=g^n(x)$ for any $x\in L_n$. To check that $h$ is in fact a cosimplicial homomorphism, by construction of $L^b$ and $h$, we just need to verify commutativity with coface maps at level $i=0$. Consider 
\[\xymatrix{L^b_{n-1}\ar[r]^{h^{n-1}}\ar[d]_{d^0_b}&K_{n-1}\ar[d]^{d^0}\\L^b_n\ar[r]^{h^n}&K_n}.\]
We only need to see what happens at $a_0\in L_{n-1}^b:$ 
\begin{align*}
d^0h^{n-1}(a_0)=d^0f^{n-1}(a_0)=&f^nd^0_+(a_0)=f^n(a_0a_1)=f^n(a_0)f^n(a_1)\text{ and}\\ 
h^{n}d^0_+(a_0)=&h^n(a_0b_n)=f^n(a_0)g^n(b_n).
\end{align*}
Denote $b=b_1$. By hypothesis $g^1(b_1)=f^1(a_1)$. Since
\begin{align*}
 g^n(b_n)=&(d^2)^{n-1}g^1(b_1)\text{ and}\\
 f^n(a_1)=&(d^2)^{n-1}f^1(a_1),
\end{align*}
the desired equality holds.
\end{proof}

\begin{corl}
Let $G$ be a well based topological group and $L$ a finitely generated cosimplicial group. Using the notation above, suppose $h_b\colon F\to L$ is a morphism. Then the inclusion $\iota_b\colon L\hookrightarrow L^b$ defines a principal $G$-bundle $|\iota_b^*|\colon B(L^b,G)\to B(L,G).$
\end{corl}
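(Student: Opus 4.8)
The plan is to deduce this corollary from Proposition \ref{prop3.4} by applying the functor $\Hom(-,G)$ and then geometric realization. First I would apply $\Hom_{\Grp}(-,G)$ to the pushout square of cosimplicial groups in Proposition \ref{prop3.4}. Since $\Hom_{\Grp}(-,G)$ converts pushouts of groups (free products amalgamated over $F$) into pullbacks of sets, and since this is compatible with the topologies on the spaces of homomorphisms when $L$ is finitely generated, I would obtain for each $n$ a pullback square of spaces
\[\xymatrix{\Hom(L^b_n,G)\ar[r]\ar[d]_{\iota_b^*}&\Hom(F^+_n,G)\ar[d]^{\iota_+^*}\\\Hom(L_n,G)\ar[r]^{h_b^*}&\Hom(F_n,G).}\]
In other words, $\Hom(L^b,G)$ is the pullback of simplicial spaces $\Hom(L,G)\times_{\Hom(F,G)}\Hom(F^+,G)$, where the right-hand map is the simplicial map $\iota_+^*\colon \Hom(F^+,G)=(EG)_*\to \Hom(F,G)=NG$ listed in the known examples. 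Concretely, using the explicit construction of $L^b$ as $\overline{F}_0 * L_n$ with $d^0_b = k_n * d^0$, one checks directly that a homomorphism $L^b_n\to G$ is exactly a pair $(\rho, g)$ with $\rho\colon L_n\to G$ and $g=\rho(a_0)\in G$ arbitrary, subject to no further constraint, so that $\iota_b^*$ is level-wise a trivial principal $G$-bundle $\Hom(L_n,G)\times G\to \Hom(L_n,G)$ with $G$ acting by right translation on the $a_0$-coordinate (twisted by the cocycle $b_n$ in the face maps, which is precisely the simplicial structure of Steenrod's $(EG)_*$ pulled back along $h_b^*$).

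Next I would pass to geometric realizations. The key point is that $\iota_+^*\colon (EG)_* \to NG$ realizes to the universal principal $G$-bundle $p\colon EG\to BG$, as recorded in the excerpt, and that for a \emph{well based} topological group $G$ geometric realization commutes with the relevant pullback: pulling back the universal bundle along $|h_b^*|\colon B(L,G)\to BG$ gives a principal $G$-bundle over $B(L,G)$ whose total space is $|\Hom(L,G)\times_{NG}(EG)_*| = |\Hom(L^b,G)| = B(L^b,G)$. Since $G$ is well based, the simplicial space $(EG)_*$ is proper (good), and $|{-}|$ preserves the pullback defining the pulled-back bundle; the resulting map $|\iota_b^*|\colon B(L^b,G)\to B(L,G)$ is then the associated principal $G$-bundle. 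One should also note that the morphism $h_b\colon F\to L$ exists precisely because $b$ is a $1$-cocycle (Lemma \ref{lemma3.2}), so the hypothesis ``$h_b$ is a morphism'' is equivalent to ``$b\in Z^1(L)$''.

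The main obstacle is the compatibility of geometric realization with the pullback over $NG$, since $|{-}|$ does not preserve arbitrary pullbacks; this is where the well-basedness of $G$ is used, guaranteeing that $(EG)_*\to NG$ is a level-wise fibration of proper simplicial spaces so that its realization is a quasi-fibration, indeed a fiber bundle (Steenrod's model $p\colon EG\to BG$ is a genuine numerable principal $G$-bundle), and that pulling it back along $|h_b^*|$ is computed level-wise before realizing. An alternative, perhaps cleaner route that sidesteps a general ``realization preserves pullbacks'' lemma: observe directly that $\iota_b^*\colon \Hom(L^b,G)\to \Hom(L,G)$ is, level-wise, the principal $G$-bundle $\Hom(L_n,G)\times G\to\Hom(L_n,G)$ with a simplicial twisting by the $1$-cocycle $(b_n)$, so that $|\iota_b^*|$ is exactly the principal $G$-bundle classified by the simplicial cocycle on $B(L,G)$ obtained by transporting the tautological cocycle on $BG$; invoking the standard fact that a level-wise-free simplicial principal $G$-space with proper realization realizes to a principal $G$-bundle then finishes the proof.
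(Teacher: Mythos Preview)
Your proposal is correct and follows essentially the same approach as the paper: apply $\Hom(-,G)$ to the pushout square of Proposition \ref{prop3.4} to obtain a pullback of simplicial spaces, then geometrically realize to get the pullback of $EG\to BG$ along $|h_b^*|$. The paper's proof is much terser and simply asserts the resulting pullback diagram; you are more careful in justifying why realization preserves this particular pullback (via well-basedness of $G$) and in offering the alternative description as a level-wise free simplicial $G$-space, but the underlying strategy is the same.
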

\begin{proof}
From the pushout diagram in Proposition \ref{prop3.4}, and applying the functors $\Hom(\_,G)$ and geometric realization, we obtain the pullback diagram
\[\xymatrix{B(L^b,G)\ar[r]\ar[d]_{|\iota_b^*|}&EG\ar[d]\\B(L,G)\ar[r]^{|h_b^*|}&BG}\]
and hence $|\iota_b^*|$ is a principal $G$-bundle.
\end{proof}

\begin{exam}We have seen that there is only one non-constant homomorphism $h_{a_1}=Id\colon F\to F$. For $q>2$ it can be shown that the same is true for $L= F/\Gamma^q$, where $h_{a_1}\colon F\to F/\Gamma^q$ at each $n$ is the quotient homomorphism. The corresponding $B((F/\Gamma^q)^+,G)$ is the space $E(q,G)$ defined in \cite[p.~94]{Ad5}, and $|h_{a_1}^*|\colon B(q,G)\to BG$ is the inclusion. The bundle $E(q,G)\to B(q,G)$ classifies transitionally nilpotent bundles of class less than $q$ (see \cite[section~5]{Ad3}). The case $q=2$ is more interesting since $Z^1(F/\Gamma^2)=\Z$. For $m=1$ we obtain $B((F/\Gamma^2)^+,G)=E(2,G)$ and $E(2,G)\to B(2,G)$ classifies transitionally commutative bundles (see \cite[section~2]{Ad1}). Since multiplication by $-1$ induces a cosimplicial automorphism of $F/\Gamma^2$, all constructions are equivalent for $m=-1$. Now let $m>1$. The bundle $B((F/\Gamma^2)^m,G)\to B(2,G)$ will classify $G$-bundles whose transition functions $g_{\alpha\beta}\colon U_\alpha\cap U_\beta\to G$ factor through
\[\xymatrix{U_\alpha\cap U_\beta\ar[r]^{\rho_{\alpha,\beta}}\ar[rd]_{g_{\alpha\beta}}&G\ar[d]^m\\&G}\]where $\rho_{\alpha\beta}$ are transitionally commutative and $m$ denotes taking the $m$-th power of elements in $G$.\end{exam}

\subsection{Relation between commutative $\mathbb{I}$-monoids and infinite loop spaces}

In this section we recall briefly the notion of $\mathbb{I}$-monoid and how it is related to infinite loop spaces. This is more widely covered in \cite{Ad3}. Our goal is to use this machinery to show that for a finitely generated cosimplicial group $L$,  $B(L,U)=\colim{n}B(L,U(n))$ is a non-unital $E_\infty$-ring space when $\Hom(L_0,U)$ is path connected.

Let $\I$ stand for the category whose objects are the sets $[0]=\emptyset$ and $[n]=\{1,...,n\}$ for each $n\geq1$, and morphisms are injective functions. Any morphism $j\colon [n]\to[m]$ in $\I$ can be factored as a canonical inclusion $[n]\hookrightarrow [m]$ and a permutation $\sigma\in \Sigma_m$. This category is symmetric monoidal under two different operations, namely, concatenation $[n]\sqcup[m]=[n+m]$ with symmetry morphism the permutation $\tau_{m,n}\in\Sigma_{n+m}$ defined as
\[\tau_{m,n}(i)=\left\{\begin{matrix}
n+i&\text{ if }i\leq m\\
i-m&\text{ if }i>m
\end{matrix}\right.\] 
and identity object $[0]$. The second operation is Cartesian product $[m]\times[n]=[mn]$ with $\tau_{m,n}^{\times}\in\Sigma_{mn}$ given by
\[\tau_{m,n}^{\times}((i-1)n+j)=(j-1)m+i\]  
where $1\leq i\leq m$ and $1\leq j\leq n$. In this case the identity object is $[1]$. Cartesian product is distributive under concatenation (both left and right).

\begin{defi}
An $\I$-\emph{space} is a functor $X\colon\I\to\Top$. This functor is determined by the following. 
\begin{enumerate}
\item A family of spaces $\{X[n]\}_{n \geq 0}$, where each $X[n]$ is a $\Sigma_n$-space; 
\item $\Sigma_n$-equivariant structural maps $j_n\colon X[n]\to X[n+1]$ (here we consider $X[n+1]$ is a $\Sigma_{n}$-space under the restriction of the $\Sigma_{n+1}$-action to the canonical inclusion $\Sigma_n\hookrightarrow \Sigma_{n+1}$) with the property: for any $j\colon[n]\to[m]$, $\sigma,\sigma^\prime\in\Sigma_m$ whose restrictions in $\Sigma_n$ are equal, we have $\sigma\cdot x=\sigma^\prime\cdot x\in X(j)(X[n])$. 
\end{enumerate}
\end{defi}

We say that an $\I$-space $X$ is a \emph{commutative $\I$-monoid} if it is a symmetric monoidal functor $X\colon(\I,\sqcup,[0])\to(\Top,\times,\{pt\})$. Additionally, we say that $X$ is a \emph{commutative $\I$-rig} if $X$ is also symmetric monoidal with respect to $(\I,\times,[1])$. For the latter definition we also require $X$ to preserve distributivity.

\begin{defi}
Let $\bf{C}$ be a small category and $Y\colon {\bf{C}}\to \Top$ a functor. Denote by ${\bf{C}}\ltimes Y$ the category of elements of $Y$, that is, objects are pairs $(c,x)$ consisting of an object $c$ of $\bf{C}$ and a point $x\in Y(c)$. A morphism in ${\bf{C}}\ltimes Y$ from $(c,x)$ to $(c^\prime,x^\prime)$ is a morphism $\alpha\colon c \to c^\prime$ in $\bf{C}$ satisfying the equation $Y (\alpha)(x) = x^\prime$.
 \end{defi}

Given $Y\colon{\bf C}\to\Top$, with the notation above, if we consider ${\bf{C}}\ltimes Y$ as a topological category whose space of objects and space of morphisms are \[\bigsqcup_{c\in{\text{obj}(\bf{C})}}Y(c)\text{ and }\bigsqcup_{f\in{\text{mor}(\bf{C})}}Y(f),\]
then we have that the homotopy colimit of $Y$ is the classifying space $B({\bf C}\ltimes Y)=\hocolim_{{\bf C}}Y,$ that is, the realization of the nerve of the category ${\bf C}\ltimes Y$. 

Let $X$ denote a commutative $\I$-monoid. The category of elements $\I\ltimes X$ is a  permutative category, that is, a symmetric monoidal category where associativity holds on the nose. According to \cite{May 2}, the classifying space of a  permutative category has an $E_\infty$-space structure, and so we get that $\hocolim_{{\I}}X$ has an $E_\infty$-space structure. Here we think of an $E_\infty$-space as a space with an operation that is associative and commutative up to a system of coherent homotopies. Thus, the group completion $\Omega B(\hocolim_{{\I}}X)$ is an infinite loop space. If $X$ is a commutative $\I$-rig, then $\I\ltimes X$ is a bipermutative category and its classifying space is an $E_\infty$-ring space (as explained in \cite{Ad3}), that is, an $E_\infty$-space with an operation that is associative and commutative (up to coherent homotopy) that is distributive (up to coherent homotopy) over the $E_\infty$-space operation.

Consider the subcategory of $\I$ consisting of the same set of objects and all isomorphisms. We denote it as $\mathbb{P}$. The (bi) permutative structure on $\I\ltimes X$ restricts to $\mathbb{P}\ltimes X$, so that $\hocolim_{\mathbb{P}}X$ is also an $E_\infty$-space ($E_\infty$-ring space) and its group completion $\Omega B(\hocolim_{\mathbb{P}}X)$ is an infinite loop space ($E_\infty$-ring space). The maps $X[n]\to *$ induce a map of (bi) permutative categories $\mathbb{P}\ltimes X\to\mathbb{P}\ltimes *$ and therefore a map of infinite loop spaces ($E_\infty$-ring spaces)
\[\rho^X\colon \Omega B(\hocolim_{\mathbb{P}}X)\to \Omega B(\hocolim_{\mathbb{P}}*).\]
It follows that the homotopy fiber $\text{hofib}\;\rho^X$ is an infinite loop space (non-unital $E_\infty$-ring space). Denote $X_\infty:=\hocolim_\mathbb{N}X$ where $\mathbb{N}$ denotes the subcategory of $\I$ with same set of objects and as arrows the canonical inclusions, and $X^+_\infty$ its Quillen plus construction applied with respect to the maximal perfect subgroup of $\pi_1(X_\infty)$. The following proposition is proved in \cite[Theorem~3.1]{Ad3}.

\begin{prop}\label{prop2.2.1}
Let $X\colon \I\to \Top$ be a commutative $\I$-monoid. Assume that

$\bullet$ the action of $\Sigma_\infty$ on $H_*(X_\infty)$ is trivial;

$\bullet$ the inclusions induce natural isomorphisms $\pi_0(X[n])\simeq \pi_0(X_\infty)$ of finitely generated abelian groups with multiplication compatible with the Pontrjagin product and in the center of the homology Pontrjagin ring;

$\bullet$ the commutator subgroup of $\pi_1(X_\infty)$ is perfect (for each component) and $X_\infty^+$ is abelian.

\noindent Then $\text{hofib}\;\rho^X\simeq X_\infty^+$, and in particular $X_\infty^+$ is an infinite loop space. 
\end{prop}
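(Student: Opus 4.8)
\emph{Proof proposal.} The plan is to recognise $\rho^X$ as the group completion of a map of $E_\infty$-spaces coming from the permutative structure, to identify its source and target via the group-completion theorem and the Barratt--Priddy--Quillen theorem, and then to read off $\text{hofib}\,\rho^X$ from a fibration to which Quillen's plus construction can be applied compatibly. First I would set $\rho^X$ up as an infinite loop map: the categories of elements $\I\ltimes X$ and $\I\ltimes *$ are permutative categories, and restricting the concatenation structure to the subcategory $\mathbb{P}$ presents $\hocolim_{\mathbb{P}}X=\bigsqcup_{n\ge0}E\Sigma_n\times_{\Sigma_n}X[n]$ and $\hocolim_{\mathbb{P}}*=\bigsqcup_{n\ge0}B\Sigma_n$ as strictly associative topological monoids. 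The maps $X[n]\to *$ assemble into a map of such monoids, and applying $\Omega B$ recovers $\rho^X$; in particular $\rho^X$ is a map of infinite loop spaces, so $\text{hofib}\,\rho^X$ is automatically an infinite loop space and only its homotopy type is in question. By Barratt--Priddy--Quillen, $\Omega B(\hocolim_{\mathbb{P}}*)\simeq QS^0$, whose basepoint component is $(B\Sigma_\infty)^+$.

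Next I would run the group-completion theorem. Using the second hypothesis, $\pi_0(X[n])$ is a fixed finitely generated abelian \emph{group} with trivial $\Sigma_n$-action (a consequence of the triviality of the $\Sigma_\infty$-action on $H_0(X_\infty)$), so $\pi_0(\hocolim_{\mathbb{P}}X)\cong\N\oplus\pi_0(X_\infty)$. Hence inverting $\pi_0$ in $H_*(\hocolim_{\mathbb{P}}X)$, which by the group-completion theorem computes $H_*\bigl(\Omega B(\hocolim_{\mathbb{P}}X)\bigr)$, amounts to passing to the mapping telescope along the stabilisation maps $E\Sigma_n\times_{\Sigma_n}X[n]\to E\Sigma_{n+1}\times_{\Sigma_{n+1}}X[n+1]$ that realise the structural maps of the $\I$-space $X$; on basepoint components this telescope is the Borel construction $E\Sigma_\infty\times_{\Sigma_\infty}X_\infty$. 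So the natural map $E\Sigma_\infty\times_{\Sigma_\infty}X_\infty\to\Omega_0 B(\hocolim_{\mathbb{P}}X)$ is a homology isomorphism onto an infinite loop space, and combined with the third hypothesis this identifies $\Omega_0 B(\hocolim_{\mathbb{P}}X)$ with the plus construction of $E\Sigma_\infty\times_{\Sigma_\infty}X_\infty$ at the maximal perfect subgroup of $\pi_1$, while the corresponding statement for $*$ is just $Q_0S^0\simeq(B\Sigma_\infty)^+$. Under these identifications $\rho^X$ becomes the plus construction of the bundle projection $q\colon E\Sigma_\infty\times_{\Sigma_\infty}X_\infty\to B\Sigma_\infty$, whose homotopy fibre is $X_\infty$.

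Finally I would invoke the compatibility of the plus construction with fibrations. By the first hypothesis the action of $\pi_1(B\Sigma_\infty)=\Sigma_\infty$ on $H_*(X_\infty)$ is trivial, so performing the plus construction on $q$ compatibly on fibre, total space and base --- killing $[\pi_1(X_\infty),\pi_1(X_\infty)]$, the maximal perfect subgroup of $\pi_1$ of the total space, and $A_\infty\subset\Sigma_\infty$, all perfect by the third hypothesis --- preserves the fibration up to homotopy and yields
\[X_\infty^+\longrightarrow(E\Sigma_\infty\times_{\Sigma_\infty}X_\infty)^+\longrightarrow(B\Sigma_\infty)^+ .\]
Comparing with the previous paragraph, the middle term is $\Omega_0 B(\hocolim_{\mathbb{P}}X)$, the right term is $Q_0S^0$, and the map is $\rho^X$; hence $\text{hofib}\,\rho^X\simeq X_\infty^+$, and since $\text{hofib}\,\rho^X$ is an infinite loop space, so is $X_\infty^+$.

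I expect the main obstacle to lie in the second step: one has to verify that the homology isomorphism supplied by the group-completion theorem genuinely realises the plus construction, so that the perfect subgroup being quotiented is the maximal one and the resulting fundamental group matches that of $B\Sigma_\infty\to Q_0S^0$ on both ends, and one has to check that triviality of the $\Sigma_\infty$-action is precisely the hypothesis licensing the plus-construction-preserves-fibrations theorem used at the end. All three hypotheses are consumed there; the remainder is bookkeeping with $\pi_0$'s and with how the perfect subgroups of $\pi_1$ of fibre, total space and base fit together.
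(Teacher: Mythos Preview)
The paper does not actually prove this proposition: immediately before the statement it says ``The following proposition is proved in \cite[Theorem~3.1]{Ad3}'', and no argument is supplied. So there is no in-paper proof to compare against.

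That said, your outline is essentially the argument of the cited reference. One realises $\hocolim_{\mathbb{P}}X=\bigsqcup_n E\Sigma_n\times_{\Sigma_n}X[n]$ and $\hocolim_{\mathbb{P}}*=\bigsqcup_n B\Sigma_n$ as topological monoids, applies the McDuff--Segal group-completion theorem (the second hypothesis is exactly what makes $\pi_0$ central in the Pontrjagin ring so that localisation can be computed by a telescope), identifies the target via Barratt--Priddy--Quillen, and then uses the plus-construction fibration theorem on the Borel fibration $X_\infty\to E\Sigma_\infty\times_{\Sigma_\infty}X_\infty\to B\Sigma_\infty$, with the first hypothesis supplying the required nilpotent action of $\pi_1$ of the base on the homology of the fibre and the third hypothesis guaranteeing that the plus constructions on fibre, total space, and base are compatible. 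Your closing paragraph correctly flags the one genuinely delicate point, namely matching the perfect normal subgroups across the fibration so that the plus constructions glue; this is where the hypothesis that $X_\infty^+$ be abelian (not merely that $[\pi_1,\pi_1]$ be perfect) is used.
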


Note that the last two conditions of the previous Proposition are satisfied when each $X[n]$ is connected and $X_\infty$ is abelian. Under these hypothesis $X_\infty$ has an infinite loop space structure.

\subsection{non unital $E_\infty$-ring space structure of $B(L,U)$}

Our first example and application of the machinery described in the previous section is showing the classical result \[\colim{m}U(m)=U\] has an infinite loop space structure. This will allow us to prove non-unital $E_\infty$-ring space structures on our spaces of interest. 
 
First, we show that $U(\_)$ is a commutative $\I$-rig. Recall that $\Sigma_m\subset U(m)$ as permutation matrices, so that $U(m)$ has a $\Sigma_m$ action. Consider the inclusions $i_m\colon U(m)\to U(m+1)$
\[A\mapsto \left(\begin{matrix}
A&0\\
0&1
\end{matrix}\right)\] 
which are continuous and preserve group structure. The maps $i_m$ restrict to the canonical inclusions $\Sigma_m\hookrightarrow \Sigma_{m+1}$, therefore 
\[i_m(\sigma\cdot A)=i_m(\sigma)i_m(A)i_m(\sigma)^{-1}=\sigma\cdot i_m(A),\]
where $\sigma\in \Sigma_m$, $A\in U(m)$ and on the right hand side $\sigma\in\Sigma_{m+1}$. Now let $\sigma,\sigma^\prime\in\Sigma_r$, $m<r$ and suppose both restrictions to the subset $\{1,...,m\}$ determine equal permutations in $\Sigma_m$. Denote $i=i_r\circ i_{r-1}\circ\cdots\circ i_m$. Then, for $A\in U(m)$, 
\[\sigma\cdot i(A)=\left(\begin{matrix}
(\sigma|_m)A(\sigma|_{m}^{-1})&0\\
0&I_{r-m}
\end{matrix}\right)=\left(\begin{matrix}
(\sigma^\prime|_m)A(\sigma^\prime|_{m}^{-1})&0\\
0&I_{r-m}
\end{matrix}\right)=\sigma^\prime\cdot i(A).\]
Therefore $U(\_)\colon \mathbb{I}\to \Top$ is a functor. This $\mathbb{I}$-space has a commutative $\mathbb{I}$-rig structure as follows. Let $\oplus_{m,n}\colon U(m)\times U(n)\to U(m+n)$ denote the block sum of matrices, which is a group homomorphism. 
The $(m,n)$ shuffle map $U(n+m)\to U(n+m)$ is given by $A\mapsto \tau_{m,n}\cdot A$. We have the commutative diagram
\[\xymatrix{U(m)\times U(n)\ar[d]^{\tau}\ar[r]^{\oplus_{m,n}}& U(m+n)\ar[d]^{\tau_{m,n}}\\ U(n)\times U(m)\ar[r]^{\oplus_{n,m}}&U(m+n)}\]
where $\tau(A,B)=(B,A)$. Therefore $U(\_)$ is a commutative $\I$-monoid. The other monoidal structure is given by $\otimes_{m,n}\colon U(m)\times U(n)\to U(mn)$ the tensor product of matrices. Indeed, by definition $\tau_{m,n}^{\times}\cdot \otimes_{m,n}(A,B)=\otimes_{n,m}\tau(A,B)$, where $A\in U(m)$ and $B\in U(n)$. Since the image $\oplus_{m,n}(U(m)\times U(n))$ correspond to direct sum, then associativity, left and right distributivity over $\otimes_{m,n}$ hold. 

Now we check the conditions of Proposition \ref{prop2.2.1}: the action of $\Sigma_m$ on $U(m)$ is homologically trivial since conjugation action on $U(m)$ is trivial up to homotopy, being $U(m)$ path connected. The inclusions $i_m$ are cellular and hence $U(\_)_\infty\simeq U$ and since $U$ is an $H$-space under block sum of matrices, it is abelian. Therefore $U(\_)_\infty\simeq U$ is an infinite loop space (non-unital $E_\infty$-ring space).

\begin{lemma}\label{lemma1s2.1}
Let $L$ be a finitely generated cosimplicial group and $G,H$ real algebraic linear groups. Let $p_1\colon G\times H\to G$ and $p_2\colon G\times H\to H$ be the projections. Then
\[B(L,p_1)\times B(L,p_2)\colon B(L,G\times H)\to B(L,G)\times B(L,H)\]
is a natural homeomorphism.
\end{lemma}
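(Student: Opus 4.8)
The plan is to reduce the statement to the level of simplicial sets (and spaces), where the corresponding fact is essentially formal, and then apply geometric realization. First I would observe that for any group $\Gamma$ the canonical map
\[
\Hom(\Gamma, G\times H) \longrightarrow \Hom(\Gamma, G)\times\Hom(\Gamma, H),\qquad \rho\mapsto (p_1\circ\rho,\ p_2\circ\rho),
\]
is a bijection, with inverse sending $(\rho_1,\rho_2)$ to $\gamma\mapsto(\rho_1(\gamma),\rho_2(\gamma))$; this is just the universal property of the product in $\Grp$. When $\Gamma$ is finitely generated and $G,H$ are topological groups, picking generators $\gamma_1,\dots,\gamma_r$ of $\Gamma$ identifies $\Hom(\Gamma,G\times H)$ with a subspace of $(G\times H)^r\cong G^r\times H^r$, and under this identification the map above is the restriction of the obvious homeomorphism $(G\times H)^r\cong G^r\times H^r$; hence it is a homeomorphism of topological spaces. (Lemma~\ref{lemma1} even tells us it is an isomorphism of affine varieties when $G,H$ are real linear algebraic groups, but we only need the homeomorphism.)

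Next I would check naturality in the cosimplicial direction. For a homomorphism $\varphi\colon\Gamma'\to\Gamma$, the square relating $\varphi^*$ on $\Hom(-,G\times H)$ to $\varphi^*\times\varphi^*$ on $\Hom(-,G)\times\Hom(-,H)$ commutes, since both composites send $\rho$ to $(p_1\rho\varphi,\ p_2\rho\varphi)$; this is immediate from the description of $\varphi^*$ as precomposition. Applying this to the coface and codegeneracy homomorphisms of $L$, we get that the collection of homeomorphisms $\Hom(L_n,G\times H)\xrightarrow{\ \cong\ }\Hom(L_n,G)\times\Hom(L_n,H)$ is a map of simplicial spaces
\[
\Hom(L,G\times H)\ \xrightarrow{\ \cong\ }\ \Hom(L,G)\times\Hom(L,H),
\]
where the right-hand side is the levelwise product simplicial space. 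Equivalently, this map is exactly $\Hom(L,p_1)\times\Hom(L,p_2)$ in the notation of the statement, and it is a levelwise homeomorphism.

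Finally I would pass to geometric realizations. Geometric realization of simplicial spaces commutes with finite products: for simplicial spaces $Y,Z$ there is a natural homeomorphism $|Y\times Z|\cong |Y|\times|Z|$, provided we work in a convenient category of spaces — which the excerpt arranges by declaring $\Top$ to mean $k$-spaces, so that the product is the Kelley product and realization is a left adjoint that preserves it. Therefore
\[
B(L,G\times H)=|\Hom(L,G\times H)|\ \cong\ |\Hom(L,G)\times\Hom(L,H)|\ \cong\ |\Hom(L,G)|\times|\Hom(L,H)|=B(L,G)\times B(L,H),
\]
and chasing through the identifications shows the composite homeomorphism is precisely $B(L,p_1)\times B(L,p_2)$; naturality in $L$ follows from the naturality of each step. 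The only point requiring any care — and the one I would state explicitly rather than leave to the reader — is that $|{-}\times{-}|\cong|{-}|\times|{-}|$ for simplicial spaces, which is why the $k$-space convention matters; everything else is a formal consequence of the universal property of products together with Lemma~\ref{lemma2} (continuity of induced maps). This is the step I expect to cite rather than prove, since it is standard (e.g. it is the reason one passes to compactly generated spaces in the first place).
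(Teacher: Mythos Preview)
Your argument is correct and matches the paper's approach: establish a levelwise (hence simplicial) homeomorphism $\Hom(L,G\times H)\cong\Hom(L,G)\times\Hom(L,H)$, then invoke that geometric realization preserves finite products. The paper cites \cite[Theorem~11.5]{May 1} for the last step and uses the real algebraic hypothesis only to observe that each $\Hom(L_n,-)$ is triangulable, hence a CW-complex, hence a $k$-space---exactly the point you handle by appealing to the ambient $k$-space convention.
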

\begin{proof}
Since $G\times H$ is a direct product, $p_1$ and $p_2$ are continuous homomorphism and therefore \[p=(p_1)_*\times(p_2)_*\colon \Hom(L,G\times H)\to \Hom(L,G)\times \Hom(L,H)\]
is a simplicial map. Its easy to check that in fact is a simplicial isomorphism. Both $G$ and $H$ being real algebraic, imply that $\Hom(L_n,G)$ and $\Hom(L_n,H)$ have a CW-complex structure, and therefore are $k$-spaces. By \cite[~Theorem 11.5]{May 1} the composition
\[\xymatrix{B(L,G\times H)\ar[r]^{|p|\;\;\;\;\;\;\;\;\;\;\;\;\;\;\;\;}& \Hom(L,G)\times \Hom(L,H)|\;\ar[r]^{\;\;\;\;\;\;\;\;|\pi_1|\times|\pi_2|}& B(L,G)\times B(L,H)}\]
is a natural homeomorphism where $|\pi_1\circ p|\times|\pi_2\circ p|=B(L,p_1)\times B(L,p_2)$.
\end{proof}

\begin{prop}\label{prop2.3.1}
Let $L$ be a finitely generated cosimplicial group, then $B(L,U(\_))$ is a commutative $\mathbb{I}$-rig.
\end{prop}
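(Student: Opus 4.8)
The plan is to transport the commutative $\I$-rig structure on $U(\_)$ established above through the functor $B(L,-)$, showing that all the required structure maps and coherences survive. First I would verify that $B(L,U(\_))\colon \I\to\Top$ is a well-defined functor: for any continuous homomorphism $f\colon G\to H$ we have the simplicial map $f_*\colon\Hom(L,G)\to\Hom(L,H)$ (as noted in the equivariant section), hence a continuous map $B(L,f)\colon B(L,G)\to B(L,H)$ after geometric realization, and this is clearly functorial. Applied to the inclusions $i_m\colon U(m)\hookrightarrow U(m+1)$ and to permutation matrices $\sigma\in\Sigma_m\subset U(m)$ this gives the $\Sigma_m$-spaces $B(L,U(m))$ and the $\Sigma_m$-equivariant structural maps $B(L,i_m)$; the compatibility condition in the definition of an $\I$-space (that $\sigma\cdot x=\sigma'\cdot x$ on $B(L,U(j)(X[n]))$ when $\sigma,\sigma'$ restrict equally) follows by applying the functor $B(L,-)$ to the corresponding identity $\sigma\cdot i(A)=\sigma'\cdot i(A)$ already verified for $U(\_)$, since $B(L,-)$ turns that equality of homomorphisms into an equality of maps.

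Next I would produce the two symmetric monoidal structures. The block sum homomorphisms $\oplus_{m,n}\colon U(m)\times U(n)\to U(m+n)$ and tensor product homomorphisms $\otimes_{m,n}\colon U(m)\times U(n)\to U(mn)$ are continuous group homomorphisms, so applying $B(L,-)$ gives maps
\[
B(L,U(m)\times U(n))\to B(L,U(m+n)),\qquad B(L,U(m)\times U(n))\to B(L,U(mn)).
\]
Here is where Lemma \ref{lemma1s2.1} is essential: it identifies $B(L,U(m)\times U(n))$ naturally with $B(L,U(m))\times B(L,U(n))$, so these become the desired multiplication maps
\[
B(L,U(m))\times B(L,U(n))\to B(L,U(m+n)),\qquad B(L,U(m))\times B(L,U(n))\to B(L,U(mn)).
\]
All the coherence diagrams (associativity, unit, the symmetry diagram with $\tau_{m,n}$ and $\tau^{\times}_{m,n}$, and the distributivity of $\otimes$ over $\oplus$) are obtained by applying the product-preserving functor $B(L,-)$ — via the natural homeomorphism of Lemma \ref{lemma1s2.1} — to the corresponding commutative diagrams of homomorphisms of unitary groups, which were already established in the discussion preceding Proposition \ref{prop2.2.1}. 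The unit objects are handled by noting $\Hom(L_n,U(0))=\Hom(L_n,\{e\})$ is a point for all $n$, so $B(L,U(0))=\{\mathrm{pt}\}$, and similarly $B(L,U(1))$ is the unit for the tensor structure up to the required coherence.

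The main obstacle is the bookkeeping around Lemma \ref{lemma1s2.1}: one must check that the naturality of that homeomorphism is strong enough that the diagrams defining a (bi)permutative / commutative $\I$-rig structure genuinely commute after realization, rather than merely up to homotopy — in particular that the identifications $B(L,U(m)\times U(n)\times U(p))\cong B(L,U(m))\times B(L,U(n))\times B(L,U(p))$ are compatible with both groupings, which follows from applying Lemma \ref{lemma1s2.1} iteratively and using that geometric realization commutes with finite products of simplicial $k$-spaces (the cited \cite[Theorem 11.5]{May 1}). One also needs that the $\Sigma_m$-action on $B(L,U(m))$ induced from permutation matrices agrees with the one coming from the $\I$-space structure through morphisms of $\I$; this is again an application of $B(L,-)$ to the corresponding statement for $U(\_)$. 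Once these compatibilities are in place, the statement follows formally, and I would also remark — as the paper does for $U$ — that the argument goes through verbatim with $U$ replaced by $SU$, $Sp$, $SO$, or $O$.
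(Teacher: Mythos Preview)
Your proposal is correct and follows essentially the same approach as the paper: both argue that $B(L,-)$ is functorial on continuous group homomorphisms, then use Lemma~\ref{lemma1s2.1} to transport the $\I$-rig structure $(\oplus_{m,n},\otimes_{m,n})$ on $U(\_)$ to maps $\mu_{m,n},\pi_{m,n}$ on $B(L,U(\_))$, with the coherences inherited by naturality of the product homeomorphism. Your write-up is in fact more explicit about the bookkeeping (triple products, units, the $\Sigma_m$-action) than the paper's terse version, but the underlying argument is the same.
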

\begin{proof}
Consider the $\mathbb{I}$-rig $U(\_)$. Both the structural maps $i_m$ and the action by elements of $\Sigma_m$ are continuous group homomorphisms and hence $B(L,U(\_))=B(L,\_) U(\_)$ is an $\mathbb{I}$-space. Also, block sum of matrices and tensor product are topological group morphisms so that with Lemma \ref{lemma1s2.1} we can define \[\mu_{m,n}=B(L,\oplus_{m,n})\circ(B(L,p_1)\times B(L,p_2))^{-1} \text{ and } \pi_{m,n}=B(L,\otimes_{m,n})\circ(B(L,p_1)\times B(L,p_2))^{-1},\]
where $p_1\colon U(m)\times U(n)\to U(m)$ and $p_2\colon U(m)\times U(n)\to U(n)$ are the projections. Let $p_1^\prime\colon U(n)\times U(m)\to U(n)$ and $p_2^\prime\colon U(n)\times U(m)\to U(m)$ denote also projections. Notice that \[\tau \circ B(L,p_2^\prime)\times B(L,p_1^\prime)=B(L,p_1)\times B(L,p_2)\circ B(L,\tau)\] (where $\tau$ as before, is the symmetry morphism in $\Top$). This implies that all properties satisfied by $\oplus_{m,n}$ and $\otimes_{m,n}$ will be preserved by $\mu_{m,n}$ and $\pi_{m,n}$. 
\end{proof}

\begin{theorem}
Let $L$ be finitely generated cosimplicial group and suppose that the space $\Hom(L_0,U(m))$ is path connected for all $m\geq 1$. Then, $B(L,U)$ is a non-unital $E_\infty$-ring space.
\end{theorem}
\begin{proof}
By Proposition \ref{prop2.3.1}, $B(L,U(\_))$ is a commutative $\I$-rig. It remains to check the conditions of Proposition \ref{prop2.2.1}. Note that the conjugation action of $\Sigma_n$ is homologically trivial since it factors through conjugation action on $U(m)$. Since all $\Hom(L_0,U(m))$ are path connected, $|\Hom(L,U(m))|=B(L,U(m))$ is path connected for all $m\geq1$. The colimit $B(L,U)$ is also an $H$-space under block sum of matrices, and therefore abelian. 
\end{proof}

\begin{exam}
The property $\pi_0(\Hom(L_0,U(m)))=0$ for all $m\geq1$ is satisfied by the following cosimplicial groups:

$\bullet$ $L=F/\Gamma^q$ and $L=F/F^{(q)}$ since $L_0=\{e\}$ in both cases. 

$\bullet$ $L=\overline{F}/\Gamma^q$ and $L=\overline{F}/F^{(q)}$ since $\Hom(L_0,U(m))=U(m)$ in both cases.

$\bullet$ Consider $\Sigma_{2,3}$, and the cosimplicial morphism $h_{\sigma_1\sigma_2}\colon F\to \Sigma_{2,3}$. The image $h_{\sigma_1\sigma_2}(F)$ defines a cosimplicial subgroup of $\Sigma_{2,3}$, such that $h_{\sigma_1\sigma_2}(F)_0=\{e\}$.
\end{exam}

\begin{remark}
The results in this section also apply for the groups $SU$ and $Sp$. For $SO$ and $O$ the proofs are not exactly similar, but still true. The arguments used in \cite[~Theorem 4.1]{Ad3} also apply in our case. 
\end{remark}

{\small\sc\footnotesize Department of Mathematics, University of British Columbia, Vancouver BC V6T 1Z2, Canada}

{\it\footnotesize E-mail adress:} \footnotesize{\texttt bernvh@math.ubc.ca}

\end{document}